\documentclass{article}
\usepackage{amsfonts}
\usepackage{amsthm}

\usepackage{amssymb,stmaryrd,mathrsfs}
\usepackage{amsmath,latexsym,mathdots}

\usepackage{hyperref}
\usepackage{manfnt}
\usepackage{verbatim}
\usepackage{frcursive}
\usepackage[applemac]{inputenc}


\newcommand{\R}{{\mathbb R}}

\newcommand{\N}{{\mathbb N}}
\newcommand{\C}{{\mathbb C}}
\newcommand{\Z}{{\mathbb Z}}
\newcommand{\HH}{{\mathbb H}}
\newcommand{\dif}{{\rm d}}
\newcommand{\Dif}{{\rm D}}
\newcommand{\Real}{{\sf Re}}
\newcommand{\Imag}{{\sf Im}}

\newcommand{\Euler}{{\sf E}}
\newcommand{\transp}{{\sf T}}
\newcommand{\Legendre}{{\sf L}}
\newcommand{\Hess}{{\sf Hess}}
\newcommand{\Lag}{{\mathscr{L}}}
\newcommand{\lag}{{\ell}}
\newcommand{\emm}{{m}}
\newcommand{\vits}{{u}}
\newcommand{\vitsL}{{w}}
\newcommand{\vol}{{v}}

\newcommand{\En}{\mathscr{E}}
\newcommand{\Ec}{\mathscr{T}}
\newcommand{\en}{\mbox{{\begin{cursive}{\emph{e}}\end{cursive}}}}
\newcommand{\funct}{{\mathscr{F}}}
\newcommand{\ham}{{{h}}}
\newcommand{\Ham}{{\mathscr{H}}}

\newcommand{\speed}{{c}}
\newcommand{\wn}{{{k}}}
\newcommand{\freq}{{\omega}}
\newcommand{\meanU}{{{\bf M}}}
\newcommand{\meanI}{{P}}

\newcommand{\momB}{{\sf M}}
\newcommand{\Impulse}{{\mathscr{Q}}}
\newcommand{\Impulseflux}{{\mathscr{S}}}
\newcommand{\Evans}{{D}}

\newcommand{\bu}{{u}}
\newcommand{\ubu}{\underline{u}}
\newcommand{\uq}{\underline{\bf q}}
\newcommand{\uv}{\underline{v}}
\newcommand{\bU}{{\bf U}}
\newcommand{\bV}{{\bf V}}
\newcommand{\bW}{{\bf W}}
\newcommand{\bC}{{\bf C}}

\newcommand{\ubM}{\underline{\bf M}}
\newcommand{\ubU}{\underline{\bf U}}

\newcommand{\uU}{\underline{U}}
\newcommand{\sJ}{{{\mathscr J}}}

\newcommand{\bS}{{\bf S}}
\newcommand{\bB}{{\bf B}}
\newcommand{\bT}{{T}}
\newcommand{\bb}{{b}}

\newcommand{\bq}{{\bf q}}
\newcommand{\bff}{{f}}
\newcommand{\bm}{{\bf m}}
\newcommand{\bSigma}{{\boldsymbol{\Sigma}}}
\newcommand{\bPi}{{\boldsymbol{\Pi}}}
\newcommand{\blambda}{{\boldsymbol{\lambda}}}

\newcommand{\vnu}{{\vnu}}

\renewcommand{\neg}{{{\sf n}}}
\newcommand{\Linvar}{{{\mathscr{A}}}}
\newcommand{\linvar}{{{\bf a}}}
\newcommand{\Fund}{{{\bf F}}}

\newcommand{\Lin}{{{\bf A}}}
\newcommand{\Linco}{{{\mathscr{A}}^{0}}}

\newcommand{\ee}{{\rm e}}

\newtheorem{corollary}{Corollary}
\newtheorem{proposition}{Proposition}
\newtheorem{theorem}{Theorem}
\newtheorem{remark}{Remark}

%

\title 
{Stability of periodic waves in Hamiltonian PDEs}


\author{Sylvie Benzoni-Gavage\thanks{benzoni@math.univ-lyon1.fr}, Pascal Noble\thanks{Pascal.Noble@math.univ-toulouse.fr}, and L.Miguel Rodrigues\thanks{rodrigues@math.univ-lyon1.fr}}


\begin{document}

\maketitle

\begin{abstract}
Partial differential equations endowed with a Hamiltonian structure, like the Korteweg--de Vries equation and many other more or less classical models, are known to admit rich families of periodic travelling waves. The stability theory for these waves is still in its infancy though. The issue has been tackled by various means. Of course, it is always possible to address stability from the spectral point of view. However, the link with nonlinear stability ~-~in fact, \emph{orbital} stability, since we are dealing with space-invariant problems~-~, is far from being straightforward when the best spectral stability we can expect is a \emph{neutral} one. Indeed, because of the Hamiltonian structure, the spectrum of the linearized equations cannot be bounded away from the imaginary axis, even if we manage to deal with the point zero, which is always present because of space invariance.
Some other means make a crucial use of the underlying structure. This is clearly the case for the variational approach, which basically uses the Hamiltonian -~or more precisely, a  constrained functional associated with the Hamiltonian and with other conserved quantities~- as a Lyapunov function. When it works, it is very powerful, since it gives a straight path to orbital stability. An alternative is the modulational approach, following the ideas developed by Whitham almost fifty years ago. The main purpose here is to point out a few results, for KdV-like equations and systems, that make the connection between these three approaches: spectral, variational, and modulational.
\end{abstract}

\paragraph{Acknowledgments} This work has been partly supported by
the European Research Council \href{http://math.univ-lyon1.fr/~filbet/nusikimo/nusikimo.htm}{ERC Starting Grant 2009, project 239983- NuSiKiMo}, and the Agence Nationale de la Recherche, JCJC project {Shallow Water Equations for Complex Fluids} 2009-2103. This paper has been prepared for the Proceedings of \emph{Journ\'ees \'Equations aux Dérivées Partielles}, \href{http://gdredp.math.cnrs.fr/}{GDR CNRS 2434}, \href{http://gdredp.math.cnrs.fr/spip/spip.php?article167}{Biarritz 2013}.

\paragraph{Keywords} periodic travelling wave, variational stability, spectral stability, modulational stability
  

\paragraph{Classification} {35B10; 35B35;  35Q35; 35Q51; 35Q53; 37K05; 37K45.}

\section{Introduction}
Sound and light are manifestations of periodic waves, even though they are hardly perceived as waves in daily life. Perhaps the most famous, clearly visible periodic waves are those propagating at the surface of water, named after  George Gabriel {Stokes}. In real-world situations, periodic water waves can be formed for instance by ships. Their two main features are \emph{non-linearity} and \emph{dispersion}, which imply that their velocity depends on both their amplitude and their wavelength.
However, it was observed in a celebrated work \cite{BenjaminFeir} that the so-called Stokes waves were not so easy to create in lab experiments. At first puzzled by this problem, Benjamin and Feir exhibited a threshold for the ratio of depth over wave length above which small amplitude Stokes waves become unstable. 

If the Stokes waves are an archetype of nonlinear dispersive waves, the underlying -~water wave~- equations are quite complicated. The purpose of this talk was to give an overview of stability theory for a wide range of nonlinear dispersive waves, of possibly arbitrary amplitude, arising as solutions of PDEs endowed with a `nice' algebraic structure. This has been a renewed, active field in the last decade, with still a number of open questions even in one space dimension.  By contrast, the theory is much more advanced regarding solitary waves, which may be viewed as a limiting case of periodic waves -~namely, when their wavelength goes to infinity.

We restrict to one-dimensional issues in what follows. In mathematical physics, there are a number of model equations supporting nonlinear dispersive waves. The most classical ones are known as 
\emph{the} {N}on-{L}inear {W}ave equation
$$\mbox{(NLW)}\quad \partial_t^2\chi - \partial_x^2\chi + v(\chi)=0\,, $$
the (generalized) Boussinesq equation
$$\mbox{(B)}\quad \partial_t^2\phi - \partial_x^2(w(\phi) \mp \partial_x^2 \phi)=0\,, $$
the (generalized) {Korteweg}-{de} {Vries} equation
$$\mbox{(KdV)}\quad \partial_tv+\partial_xp(v)=-\partial_x^3v\,,$$
and the {N}on-{L}inear {Schrödinger} equation
$$\mbox{(NLS)}\quad i \partial_t\psi +\tfrac{1}{2} \partial_x^2 \psi\,=\,\psi \,g(|\psi|^2)\,.$$
It is on purpose that we have chosen to write non-linear terms in their most general form here above -~observe that nonlinearities are written as $v(\chi)$ in (NLW), $w(\phi)$ in (B), $p(v)$ in (KdV), and $\psi \,g(|\psi|^2)$ in (NLS). 
As a matter of fact, we shall refrain from invoking integrability arguments, which only work for some specific nonlinearities.
Nevertheless, a common feature of these equations is that they are endowed with a Hamiltonian structure. Indeed, they can all be written in the abstract form
\begin{equation}\label{eq:absHam}\partial_t\bU = \sJ (\Euler \Ham[\bU])\,,\end{equation}
where the unknown $\bU$ takes values in $\R^N$ ($N=1$ for (KdV), $N=2$ for (B), (NLS), $N=3$ for (NLW)), $\sJ$ is a skew-adjoint differential operator, and $\Euler \Ham$ denotes the variational derivative of $\Ham$, whose $\alpha$-th component ($\alpha\in \{1,\ldots,N\}$) merely reads as follows when $\Ham=\Ham(\bU,\bU_x)$,
 $$(\Euler \Ham[\bU])_\alpha\,:=\,\frac{\partial \Ham }{\partial U_\alpha}(\bU,\bU_x)\,-\,\Dif_{x}\left(\frac{\partial \Ham }{\partial U_{\alpha,x}}(\bU,\bU_x)\right).$$
 Here above, $\Dif_{x}$ stands for the \emph{total} derivative. More explicitly, this means that
$$\Dif_{x}\left(\frac{\partial \Ham }{\partial U_{\alpha,x}}(\bU,\bU_x)\right)\,=\,\frac{\partial^2 \Ham(\bU,\bU_x) }{\partial U_\beta \partial U_{\alpha,x}}\,U_{\beta,x}\,+\,\frac{\partial^2 \Ham(\bU,\bU_x) }{\partial U_{\beta,x} \partial U_{\alpha,x}}\,U_{\beta,xx}\,,
$$
where we have used Einstein's convention of summation over repeated indices.  Another convention is that square brackets $[\cdot]$ signal a function of not only the dependent variable $\bU$ but also of its derivatives $\bU_x$, $\bU_{xx}$, \ldots (For instance, we shall either write $\Ham(\bU,\bU_x)$ or $\Ham[\bU]$.)
A motivation for addressing the stability of periodic waves in such an abstract setting is to make the most of algebra, irrespective of the 
model under consideration. However, we do have a specific model in mind, namely 
the Euler--Korteweg system, which admits two different formulations depending on whether we choose Eulerian coordinates, 
\begin{equation*}\label{eq:EKabs1d}
\mbox{(EKE)}\quad \left\{\begin{array}{l}\partial_t\rho +\partial_x (\rho \vits)\,=\,0\,,\\ [5pt]
\partial_t \vits + \vits\partial_x\vits \,+\,\partial_x(\Euler_\rho \En )\,=\,0\,,\quad \En=\En(\rho,\rho_x)\,,
\end{array}\right.
\end{equation*}
or Lagrangian coordinates, 
\begin{equation*}\label{eq:EKabsLagb}
\mbox{(EKL)}\quad \left\{\begin{array}{l}\partial_t{\vol} \,=\,\partial_y{\vits}\,,\\ [5pt]
\partial_t {\vits} \,=\, \partial_y( \Euler_\vol \en)\,,\quad \en=\en(\vol,\vol_y)\,,
\end{array}\right.
\end{equation*}
both fitting the abstract framework in \eqref{eq:absHam}. (For details on all these equations, see Table in Appendix.) This is not that a specific model though. What we call the Euler--Korteweg system comprises many models of mathematical physics, including the Boussinesq equation for water waves, as well as (NLS) after Madelung's transformation, see for instance \cite{SBG-DIE} for more details. 

In the literature on Hamiltonian PDEs, the distinction is often made between `NLS-like equations', in which $\sJ$ is merely a real skew-symmetric matrix, and `KdV-like equations', in which $\sJ=\bB\partial_x$ with $\bB$ a real symmetric matrix. This distinction is to some extent artificial, since for instance (NLS) can be written as a special case of the KdV-like system (EKE), and on the contrary (EKE) can take the form of a NLS-like system if the hydrodynamic potential is to replace the velocity $\vits$ as a dependent variable. However,  there should be  a most `natural' formulation for each equation or system.

From now on, we concentrate on KdV-like equations, 
and assume that $\sJ=\bB\partial_x$ with $\bB$ a nonsingular, symmetric matrix. In this case, \eqref{eq:absHam} is itself a system of conservation laws, which reads
\begin{equation}\label{eq:absHamb}\partial_t\bU = \partial_x (\bB\,\Euler \Ham[\bU])\,,\end{equation}
and turns out to admit the additional, scalar conservation law
\begin{equation}
\label{eq:impulsecl}
\partial_t\Impulse(\bU) = \partial_x( \Impulseflux[\bU])\,
\end{equation}
with $$\Impulse(\bU):= \tfrac{1}{2}\, \bU\cdot \bB^{-1} \bU\,,\; \Impulseflux[\bU]\,:=\,\bU \cdot \Euler \Ham[\bU]\,+\,\Legendre \Ham[\bU]\,,$$
$$\Legendre \Ham[\bU]\,:=\,
 U_{\alpha,x}\,\frac{\partial \Ham}{\partial  U_{\alpha,x}}(\bU,\bU_{x}) - \Ham(\bU,\bU_{x})\,.$$
The dots $\cdot$ in the definitions of $\Impulse$ and $\Impulseflux$ are for the `canonical' inner product 
 $\bU\cdot\bV=U_\alpha V_\alpha$ in $\R^N$.
The letter $\Legendre$ stands for the `Legendre transform' (even though it is considered in the original variables $(\bU,\bU_{x})$).
Equation \eqref{eq:impulsecl} is satisfied along any smooth solution of \eqref{eq:absHam}.
Notice that for any (smooth) function $\bU$,
\begin{equation}\label{eq:translx}
\partial_x \bU = \partial_x (\bB\,\Euler \Impulse[\bU]) \,.
\end{equation}
Viewed as $\partial_x \bU=\sJ (\Euler \Impulse[\bU])$, this relation
reveals that the (local) conservation law \eqref{eq:impulsecl} for $\Impulse(\bU)$ is associated with the invariance of \eqref{eq:absHamb} under spatial translations. Any such quantity\footnote{which also exist for NLS-like equations, but are no longer algebraic and  depend on $\bU_x$, see Table in Appendix.} has been called an \emph{impulse} by Benjamin \cite{Benjamin}. Of course there is also a conservation law associated with the invariance of \eqref{eq:absHam} under time translations, which is nothing but the (local) conservation law for the Hamiltonian
\begin{equation}
\label{eq:Hamcl}
\partial_t\Ham(\bU,\bU_x) = \partial_x\Big(\tfrac{1}{2} \Euler\Ham[\bU]\cdot \bB \Euler\Ham[\bU]+ \nabla_{\bU_x}\Ham[\bU]\cdot \Dif_x( \Euler\Ham[\bU]) \Big)\,.
\end{equation}
However, this rather complicated conservation law will play a much less prominent role than \eqref{eq:impulsecl} in what follows.

For a travelling wave $\bU=\ubU(x-\speed t)$ of speed $\speed$ to be solution to \eqref{eq:absHam}, one must have 
by \eqref{eq:translx}  that
$$\partial_x(\Euler (\Ham+\speed\Impulse)[\ubU])=0\,,$$
or equivalently, there must exist $\blambda\in \R^N$ such that
\begin{equation}
\label{eq:EL}
\Euler (\Ham+\speed\Impulse)[\ubU] \,+\,\blambda\,=\,0\,.
\end{equation}
This is nothing but the Euler-Lagrange equation associated with the \emph{Lagrangian}
$$\Lag= \Lag(\bU,\bU_x;\speed,\blambda):= \Ham(\bU,\bU_x)+\speed\Impulse(\bU)\,+\,\blambda\cdot \bU\,.$$
As is well-known, an Euler-Lagrange equation for a Lagrangian $\Lag$ admits $\Legendre\Lag$~-~the `Legendre transform' of $\Lag$~-~ as a first integral. Unsurprisingly, this first integral coincides here with 
$\Impulseflux+\speed \Impulse$, a quantity that is clearly constant along the travelling wave, thanks to \eqref{eq:impulsecl}. The reader may easily check indeed that 
$$\Legendre\Lag[\ubU]\,=\, \Impulseflux[\ubU]+\speed \Impulse[\ubU]$$
as soon as \eqref{eq:EL} holds true.
Therefore, a full set of equations for the travelling profile $\ubU$ consists of  \eqref{eq:EL} together with
\begin{equation}
\label{eq:ELham}
\Legendre\Lag[\ubU]\,=\,\mu\,,
\end{equation}
where $\mu$ is a constant of integration.
Recalling that $\Lag$ depends on $(\speed, \blambda)$, we see that a travelling profile $\ubU$ depends 
on $(\speed, \blambda, \mu)\in \R^{N+2}$, which `generically' makes the set of profiles an $(N+2)$-dimensional manifold. This is up to translations of course, because any translated version $x\mapsto \ubU(x+s)$ (for an arbitrary $s\in \R$) of $\ubU$ still solves  \eqref{eq:EL}-\eqref{eq:ELham}.

In practice, the \emph{existence} of periodic waves is not straightforward. However, it almost becomes so if $N=1$ or $2$, under a few assumptions that are met by all our KdV-like equations (namely, (KdV) itself, (EKE), and (EKL)).
The simplest case is $N=1$, with the dependent variable $\bU$ being reduced to a scalar variable $v$, and
$$\Ham=\En(v,v_x)\,,\;\frac{\partial^2 \Ham}{\partial v_x^2}=\frac{\partial^2 \En}{\partial v_x^2}=:\kappa(v)>0\,.$$
(This is a slight generalization of what happens with the usual KdV-equation, in which $\kappa$ is constant.)
A little more complicated case is with $N=2$, with the dependent variable $\bU=(v,u)$, and 
\begin{equation}\label{structure1}
\Ham= \Ham(v,\bu,v_x)\,=\,\En(v,v_x)\,+\,\Ec(v,\bu)\,,
\end{equation}
such that 
\begin{equation}\label{structure2}
\frac{\partial^2 \Ham}{\partial v_x^2}=\frac{\partial^2 \En}{\partial v_x^2}=:\kappa(v)>0\,,\quad 
\frac{\partial^2\Ham}{\partial {\bu}^2}=
\frac{\partial^2\Ec}{\partial {\bu}^2}=:\bT(v)>0\,.
\end{equation}
\begin{equation}\label{structure3}
\bB^{-1}=\left(\begin{array}{cc} a & \bb 
\\ \bb & 0
\end{array}\right)\,,\quad b\neq 0\,.
\end{equation}
(These assumptions are met by both (EKE) and (EKL).)
In this way, we may eliminate $\ubu$ from the profile equations \eqref{eq:EL} and receive a single, second order ODE in $\uv$, which also inherits a Hamiltonian structure, and is therefore \emph{completely integrable}.
The reader might want to see this equation. Otherwise, they may skip what follows and go straight to the end of this section. 

The second component in \eqref{eq:EL} reads indeed
$$\bT(\uv)\,\ubu+
\partial_{\bu} \Ec(\uv,0)+\speed\, \uv\, \bb +
\lambda_2=0\,,$$
where
$\lambda_2$ is the second component of $\blambda$.
Since $\bT(\uv)$ is nonzero, this gives 
$$\ubu=\bff(\uv;\speed,
\lambda_2):=-\bT(\uv)^{-1}\,(\partial_{\bu} \Ec(\uv,0)+\speed\, \uv\, \bb +
\lambda_2)\,.$$
By plugging this expression in \eqref{eq:ELham}, we arrive at
$$
\Legendre \En[\uv]\,-
\Ec(\uv,\bff(\uv;\speed,
\lambda_2))\,-\,
\speed \Big(\tfrac{1}{2} a \uv^2\,+\,\uv \bb
\bff(\uv;\speed,
\lambda_2)\Big)\,-\,\lambda_1\,\uv \,-\,
\lambda_2\, \bff(\uv;\speed,
\lambda_2)\,=\,\mu\,.$$
Despite its terrible aspect,  this equation is merely of the form
\begin{equation}\label{eq:profv}
\frac{1}{2}\kappa(\uv)\uv_x^2\,+\,W(\uv;\speed,\blambda)\,=\,\mu\,,
\end{equation}
if $\En$ is really quadratic in $v_x$ (\emph{i.e.} if $\partial_{v_x}\En(v,0)=0$). 
We obtain a similar one in the case $N=1$ with $\Ham=\En(v,v_x)$.
Eq.~\eqref{eq:profv} can be viewed as an integrated version of the Euler--Lagrange ODE, $\Euler\lag=0$, associated with the `reduced' Lagrangian
$$\lag:=\frac{1}{2}\kappa(\uv)\uv_x^2\,-\,W(\uv;\speed,\blambda)\,.$$
Incidentally, $\Euler\lag=0$ admits as a first integral the `reduced' Hamiltonian
$$\ham:=\frac{1}{2}\kappa(\uv)\uv_x^2\,+\,W(\uv;\speed,\blambda)\,.$$
We thus find families of periodic orbits  parametrized by $\mu$ around any local minimum of the potential $W(\cdot;\speed,\blambda)$. In case $W(\cdot;\speed,\blambda)$ is a double-well potential, which is what happens with the famous van der Waals/Cahn--Hilliard/Wilson energies, a same parameter $\mu$ can clearly be associated with two different orbits. In other words, the whole set of periodic orbits is not made of a single graph over the set of parameters $(\mu,\blambda,\speed)$. Nevertheless, each family of periodic orbits
can be parametrized by $(\mu,\speed,\blambda)$, as long as the wells of 
$W(\cdot;\speed,\blambda)$  remain distinct.

Going back to the more comfortable general setting, let us just assume that there exist open sets of parameters $(\mu, \blambda, \speed)$ for which \eqref{eq:EL}-\eqref{eq:ELham} have a unique periodic solution up to translations. Note that the set of \emph{solitary} wave profiles may be viewed as a co-dimension one boundary of periodic profiles. Indeed, for a solitary wave profile, once $\blambda$ has been prescribed by the endstate $\bU_\infty=(v_\infty,\bu_\infty)$,
$$\blambda\,=\,-\,\nabla_{\bU} (\Ham+\speed\Impulse)(\bU_\infty,0)\,,$$
the constant of integration $\mu$ is given by 
$$\mu=- \Ham(v_\infty,\bu_\infty,0)\,-\,\speed\Impulse (\bU_\infty) \,-\,\blambda \cdot \bU_\infty\,.$$

We now aim at investigating the stability of periodic travelling waves $\bU=\ubU(x-\speed t)$.
For this purpose, 
some global, stringent assumptions --- for instance quadraticity in $v_x$ and $\bu$ --- may 
often be  
relaxed to suitable, local invertibility assumptions.

\section{Various types of stability}
Let us consider a periodic travelling wave $\bU=\ubU(x-\speed t)$ solution to \eqref{eq:absHam}. In other words, we assume that $\ubU$ is a periodic solution to \eqref{eq:EL}-\eqref{eq:ELham}, and denote by 
$\Xi$ its period\footnote{Please note that this is a \emph{spatial} period. We refrain from using the word `wavelength' here in order to prevent the reader from thinking $\bU$ as a harmonic wave. It can be a cnoidal wave, or any kind of periodic wave.}. The latter is supposed to be uniquely determined, say in the vicinity of a reference profile, by the parameters $(\mu, \blambda, \speed)$.
As to the profile $\ubU$, it can only be unique up to translations. 
Thus, we may assume without loss of generality that $\uv_x(0)=0$. This choice will play a role in subsequent calculations.
Let us now review a series of related notions and tools.

\subsection{Variational point of view}\label{ss:var}
By the Euler--Lagrange equation in \eqref{eq:EL}, $\ubU$ is a critical point of the functional
$$\funct^{(\speed,\blambda,\mu)}: \bU\,\mapsto \,\int_{0}^{\Xi} (\Ham(\bU,\bU_x)+\speed \Impulse(\bU) + \blambda\cdot \bU +\mu)\,\dif x \,.$$
(At this point, the $\mu$ term does not play any role but it will come into play later on.)
Would in addition $$\Theta(\mu,\blambda,\speed):= \funct^{(\speed,\blambda,\mu)}[\ubU]=\int_{0}^{\Xi} (\Ham(\ubU,\ubU_x)+\speed \Impulse(\ubU) + \lambda\cdot \ubU +\mu)\,\dif x$$ be a (locally) minimal value of $\funct^{(\speed,\blambda,\mu)}$,
it would be natural to use this functional as a \emph{Lyapunov} function in order to show the stability of $\ubU$. This would require, though, that its Hessian,
$$\Linvar:=\Hess(\Ham+\speed \Impulse)[\ubU]$$
be a positive differential operator. (Of course $\Linvar$ depends on the parameters $(\mu, \blambda, \speed)$ but we omit to write them in order to keep the notation simple.) This we would call \emph{variational} stability. However, 
there is no hope that it be the case. A first reason is, by differentiating \eqref{eq:EL} with respect to $\speed$, 
we readily see that $\Linvar \ubU_x = 0$. 
 Hence $\Linvar$ has a nontrivial kernel on $L^2(\R/\Xi\Z)$, containing at least $\ubU_x$, as is always the case with space-invariant problems.
An even worse observation is that, by a Sturm--Liouville argument applied to the second order ODE  satisfied by $\uv$, the equality $\Linvar \ubU_x = 0$ certainly implies that $\Linvar$ has a negative eigenvalue (see Appendix for more details). Nevertheless, what we can hope for is \emph{constrained} variational stability. Indeed, knowing that $\bU$ and $\Impulse(\bU)$ are conserved quantities, it can be that the
values of $\funct^{(\speed,\blambda,\mu)}$ which are lower than $\Theta(\mu,\blambda,\speed)$ are
not seen on the manifold 
$${\mathscr C}:=\{\bU\,;\;  \textstyle \int_{0}^{\Xi}\Impulse(\bU)\,\dif x =\int_{0}^{\Xi}\Impulse(\ubU)\,\dif x\,,
\int_{0}^{\Xi} \bU\,\dif x =\int_{0}^{\Xi} \ubU \,\dif x\}\,.$$
By `not seen' we mean an infinite-dimensional analogue of what happens for instance with the indefinite
function $(x,y)\mapsto y^2-x^2$, which does have a (local) minimum along any curve lying in $\{(0,0)\}\cup \{(x,y)\,;\;|x|<|y|\}$. 
Determining whether ${\mathscr C}$ is located in the `good' region amounts to identifying suitable inequalities, 
which may be viewed as generalizations of the Grillakis-Shatah-Strauss criterion known for solitary waves \cite{GSS}, as we shall explain in Section \ref{ss:GSS}. These inequalities should ensure that $\Linvar$ is nonnegative on the tangent space $T_{\ubU}{\mathscr C}$, a necessary condition for the functional $\funct^{(\speed,\blambda,\mu)}$ to be minimized at $\ubU$ along ${\mathscr C}$. Then we may speak of constrained variational stability \emph{despite} the translation-invariance problem, that is, even though $\ubU$ is not a strict minimizer.
Indeed, as observed in earlier work on solitary waves \cite[Lemma 3.2]{GSS}, 
any $\bU$ close to $\ubU$ admits  by the implicit function theorem a translate $
x\mapsto \bU(x+s(\bU))$ such that $
\bU(\cdot+s(\bU))-\ubU$ is orthogonal to $\ubU_x$ with respect to the $L^2$ inner product. 
This argument clearly paves the way towards \emph{orbital} stability. As a matter of fact, by reasoning as in \cite[Theorem 3.5]{GSS} with an appropriate choice of a function space $\HH\subset L^2(\R/\Xi \Z)$ in which we would have a flow map $\bU(0)\mapsto \bU(t)$ for \eqref{eq:absHam}, we might prove that
$$
\forall \varepsilon>0,\;\exists \delta>0\,;\;
\|\bU(0)\,-\,\ubU\|_\HH\,\leq \delta\; \Rightarrow \forall t\geq 0\,,\\
\displaystyle \inf_{s\in \R}\,\|\bU(t)\,-\,\ubU(\cdot+s)\|_\HH\,\leq\,\varepsilon\,.
$$
This would mean orbital stability of $\ubU$ with respect to \emph{co-periodic} perturbations ($\HH$ being made of $\Xi$-periodic functions). Possibly redefining $\funct^{(\speed,\blambda,\mu)}$ as an integral over an interval of length  $n\Xi$ for an integer $n\geq 2$, we might also prove orbital stability with respect to multiply periodic perturbations, that is in $L^2(\R/n\Xi\Z)$. Note however that this would require a more delicate count of signatures \cite{DeconinckKapitula}, because the negative spectrum of $\Linvar$ grows bigger when $n$ increases (again by a Sturm--Liouville argument).
As to `localized' perturbations, there is no obvious definition of a functional that would play the role of $\funct^{(\speed,\blambda,\mu)}$. This differs from the case of solitary waves, for which 
$${\mathscr M}^{(\bU_\infty,c)}:\bU\,\mapsto \,\int_{-\infty}^{\infty} (\Ham(\bU,\bU_x)+\speed \Impulse(\bU) + \lambda\cdot \bU +\mu)\,\dif x$$
does the job. If $\bU=\ubU(x-\speed t)$ is a solitary wave homoclinic to $\bU_\infty$, the integral $\momB(\bU_\infty,c):={\mathscr M}^{(\bU_\infty,c)}(\ubU)$ has been known as the \emph{Boussinesq moment of instability}, and the Grillakis-Shatah-Strauss criterion requires that 
$$\frac{\partial^2 \momB}{\partial c^2} =:\momB_{cc}>0$$
for this wave to be stable \cite{Boussinesq,Benjamin72,BonaSachs,SBG-DIE}. The reason why it is the sign of $\momB_{cc}$ that plays a role in the solitary wave stability is not difficult to see, as soon as we have in mind the following crucial relations,
$$\Linvar \ubU_c\,=\,\nabla \Impulse(\bU_\infty)-\nabla\Impulse(\ubU)=:\uq\,,\quad \momB_{cc}=-\langle \uq\cdot \ubU_c\rangle_{L^2}\,,$$
obtained by differentiating the profile equation \eqref{eq:EL} with respect to $c$ at fixed $\bU_\infty$, and of course also $\momB$. Assuming that $\momB_{cc}$ is nonzero, we thus see that any $\bU\in {\mathscr D}(\Linvar)$ can be decomposed in a unique way as $\bU=a \ubU_c+\bV$ with 
$\langle \uq\cdot\bV\rangle_{L^2}=0$, 
and
$$\langle \Linvar \bU\cdot \bU\rangle_{L^2}\,=\,-\,a^2\,\momB_{cc}\,+\,\langle \Linvar \bV\cdot \bV\rangle_{L^2}\,.$$
On this identity we see that the negative signature $\neg(\Linvar)$ of $\Linvar$ equals the one 
$\neg(\Linvar_{|\uq^\perp})$ 
of~$\Linvar_{|\uq^\perp}$ if $\momB_{cc}<0$, whereas 
$$\neg(\Linvar)=\neg(\Linvar_{|\uq^\perp})+1$$
if $\momB_{cc}>0$. In the latter situation, if it is true that $\Linvar$ has a single negative eigenvalue, we find that $\Linvar_{|\uq^\perp}$ has no negative spectrum, hence constrained variational stability. (The proof of orbital stability then follows by a contradiction argument \cite{GSS,BSS}.) On the other hand, $\Linvar_{|\uq^\perp}$ does have negative spectrum if  $\momB_{cc}<0$, hence constrained variational instability. (The proof in \cite{GSS} that this implies orbital instability is trickier, and does not work if we cannot assure that there is a negative direction $y$ of $\Linvar_{|\uq^\perp}$ in the range of $\sJ$, which is equivalent to requiring that $\int_{-\infty}^{+\infty} y \dif x=0$
if $\sJ= \bB\partial_x$. This issue was fixed in \cite{BSS} for (KdV).)

Let us go back to periodic waves. The functional $\funct^{(\speed,\blambda,\mu)}$ defined at the beginning of this section turns out to be a ubiquitous tool for the stability analysis of the periodic travelling waves $\bU=\ubU(x-\speed t)$ defined by \eqref{eq:EL}-\eqref{eq:ELham}. We shall repeatedly meet  its second variational derivative, $\Linvar=\Hess(\Ham+\speed \Impulse)[\ubU]$, which depends not only on $\speed$ but also on $(\blambda,\mu)$ through the profile $\ubU$
and whose spectrum undoubtedly plays a crucial role in the stability or instability of $\ubU$.
In addition, the value of $\funct^{(\speed,\blambda,\mu)}$ at $\ubU$, which we have denoted by $\Theta(\mu,\blambda,\speed)$, and the variations of $\Theta$ with respect to $(\speed,\blambda,\mu)$ show up in stability conditions from both the spectral and modulational points of view.

\subsection{Spectral point of view}\label{ss:spect}

A widely used approach to stability of equilibria consists in \emph{linearizing} about these equilibria. Even though periodic waves $\bU=\ubU(x-\speed t)$ are not genuine equilibria, they can be changed into \emph{stationary} solutions by making a change of frame. Indeed, in a frame moving with speed $\speed$, Eq.~\eqref{eq:absHamb} becomes
$$\partial_t\bU -\,\speed \partial_x\bU= \partial_x (\bB\,\Euler \Ham[\bU])\,,$$
or equivalently,
$$\partial_t\bU= \bB \partial_x (\,\Euler (\Ham\,+\,\speed \Impulse)[\bU])\,,$$
which admits $\ubU=\ubU(x)$ as special solutions. Linearizing about $\ubU$ we receive
the system
$$\partial_t\bU= \bB \partial_x (\,\Linvar \bU)\,,$$
where we recognize $\Linvar=\Hess(\Ham+\speed \Impulse)[\ubU]$.  Therefore, the linearized stability of $\ubU$ should be encoded by the spectrum of $\Lin=\sJ \Linvar$ with $\sJ=\bB \partial_x$. By definition, $\ubU$ will be said to be \emph{spectrally stable} if the operator $\Lin$ has no spectrum in the right-half plane. Note that, since $\sJ$ is skew-adjoint and $\Linvar$ is self-adjoint -~and both are real-valued~-,
possible eigenvalues of $\Lin$ arise as quadruplets $(\tau,\overline{\tau},-\tau,-\overline{\tau})$. 
This means that any eigenvalue outside the imaginary axis would imply instability. Furthermore,
according to  \cite[Theorem 3.1]{PegoWeinstein}, the number of eigenvalues of  $\Linvar$ in the left-half plane controls,  in some sense, the number of unstable eigenvalues of $\Lin$. Recalling that $\Linvar$ has at least one negative eigenvalue, there is room for (at least) one unstable eigenvalue of $\Lin$.

These considerations are rather loose actually, because the spectrum of a differential operator depends on the chosen functional framework. We may look at the differential operator $\Lin$ as an unbounded operator on $L^2(\R/\Xi\Z)$, in which case its spectrum is entirely made of isolated eigenvalues. These concern what is usually called \emph{co-periodic} spectral stability. We may widen the class of possible perturbations and consider $\Lin$ as an unbounded operator on $L^2(\R/n\Xi\Z)$ with $n$ any integer greater than one.  
Finally, we may consider `localized' perturbations by looking at 
$\Lin$ as an unbounded operator on $L^2(\R)$. As was shown by Gardner \cite{Gardner93}, the spectrum of 
$\Lin$ on $L^2(\R)$ is made of a collection of closed curves of so-called $\nu$-eigenvalues. For any $\nu \in \R/2\pi\Z$, a $\nu$-eigenvalue is an eigenvalue of the operator 
$\Lin^\nu:=\Lin(\partial_x+{i\nu}/{\Xi})$ on $L^2(\R/\Xi\Z)$. These definitions are motivated by the equivalence, which holds for all $\tau\in \C$,
$$(\Lin \bU=\tau \bU\,,\;\bU(\cdot+\Xi)=\ee^{i\nu}\bU
)\; \Leftrightarrow \; 
(\Lin^\nu \bU^\nu=\tau \bU^\nu\,,\;\bU^\nu(\cdot+\Xi)=\bU^\nu
)\,,$$
where we have introduced the additional notation 
$$\bU^\nu:x\mapsto \bU^\nu(x)=\ee^{-i\nu x/\Xi}\,\bU(x)\,.$$
All this is linked to the Floquet theory of ODEs with periodic coefficients, and we shall refer to $\nu$ as a \emph{Floquet exponent}.
Furthermore, there is a tool encoding all kinds of spectral stability,  with respect to either square integrable, or multiply-periodic, or just co-periodic perturbations. Indeed, under the assumption made earlier 
in \eqref{structure1}-\eqref{structure2}-\eqref{structure3}
that 
$$\Ham= \Ham(v,\bu,v_x)\quad \mbox{with}\quad\frac{\partial^2 \Ham}{\partial v_x^2}=\kappa(v)>0
\quad \mbox{and}\quad\nabla^2_{\bu}\Ham=\bT(v)>0\,,$$
the eigenvalue equation $\Lin \bU=\tau \bU$ is equivalent to a system of $(N+3)$ ODEs (because it involves three derivatives of $v$). If $\Fund(\cdot;\tau)$ denotes its fundamental solution, the existence of a nontrivial $\bU$ such that
$$\Lin \bU=\tau \bU\,,\;\bU(\cdot+\Xi)=\ee^{i\nu}\bU\,,$$
is equivalent to $\Evans(\tau,\nu)=0$, where
$$\Evans(\tau,\nu)\,:=\,\det(\Fund(\Xi;\tau)\,-\,\ee^{i\nu})\,.$$
This $\Evans=\Evans(\tau,\nu)$  has been called an Evans function. According to its definition, 
$\Evans(\tau,0)=0$ means that $\tau$ is an eigenvalue of $\Lin$ on $L^2(\R/\Xi\Z)$. In other words, if $D(\cdot;0)$ vanishes somewhere outside the imaginary axis, the wave $\ubU$ is unstable with respect to co-periodic perturbations.
Similarly, if for any $n\in \N^*$ there is a zero of $D(\cdot,2\pi/n)$ outside the imaginary axis, the wave $\ubU$ is unstable with respect to perturbations of period $n\Xi$. If for any $\nu \in \R/2\pi\Z$, $D(\cdot,\nu)$ has a zero outside 
the imaginary axis, then this zero is an eigenvalue of $\Lin$ on $L^\infty(\R)$, and also belongs to the spectrum of 
 $\Lin$ on $L^2(\R)$, 
 $$\displaystyle\sigma(\Lin)\,=\,\bigcup_{\nu\in\R/2\pi\Z}\sigma (\Lin^\nu)\,,$$
 which means that the wave $\ubU$ is unstable with respect to both bounded and square integrable perturbations.

Therefore, locating the zeroes of $\Evans(\cdot,\nu)$ when $\nu$ varies over $\R/2\pi\Z$ provides valuable information on the stability of the wave $\ubU$. The  `only' problem with $\Evans$ is that it is not known explicitly in general. If we are not to rely on numerical computations, we can only determine some of its asymptotic behaviors. This is often sufficient to prove instability results. A most elementary way concerns \emph{co-periodic instability}. Indeed, since the operator $\Linco$ is real-valued, the function $\Evans(\cdot,0)$ can be constructed so as to be real-valued too. In this case, finding a zero of $\Evans(\cdot,0)$ on $(0,+\infty)$ 
may just be 
a matter of applying the mean value theorem, once we know the behavior of $\Evans(\tau,0)$ for $|\tau|\ll 1$ and for $\tau\gg 1$, $\tau\in \R$. Another possibility is to detect \emph{side-band instability}, which occurs when a zero of 
$\Evans(\cdot,\nu)$ bifurcates from $0$ 
into the right half-plane 
for $|\nu|\ll 1$.

Let us mention that alternative approaches to locate unstable eigenvalues have been proposed that use, for instance, the Krein signature. See \cite{KollarMiller} for new insight on these distinct tools that are the Evans function and the Krein signature, and for the definition of an Evans-Krein function, which carries more information regarding the eigenvalue count than the original Evans function. However, the approach in  \cite{KollarMiller} does not apply here because our operator $\sJ$ is not onto. This is a recurrent difficulty with KdV-like PDEs.

\subsection{Modulational point of view} \label{ss:modul}
We consider an open set $\Omega$ of $(\mu, \blambda, \speed)$ and assume that we have a smooth mapping
$(\mu, \blambda, \speed)\in \Omega \mapsto (\ubU,\Xi)$ such that
\eqref{eq:EL}-\eqref{eq:ELham} hold true with $\ubU(0)=\ubU(\Xi)$, and $\uv_x(0)=\uv_x(\Xi)=0$.
We are interested in mild modulations of the wave $\bU=\ubU(x-\speed t)$, in which $(\mu, \blambda, \speed)$ will vary according to a slow time $T=\varepsilon t$ and on a small length $X=\varepsilon x$, with $\varepsilon\ll 1$.
The so-called modulated equations will consist of conservation laws in the $(X,T)$ variables for 
\begin{itemize}
\item the 
wave number, $\wn=1/\Xi$,
\item the mean value of the wave, $\meanU:= \wn \int_{0}^{\Xi} \ubU \,\dif x$,
\item the mean value of the impulse, $\meanI:= \wn \int_{0}^{\Xi} \Impulse(\ubU) \,\dif x$.
\end{itemize}

Before writing down these equations, let us see whether the mapping 
$(\mu, \blambda, \speed)\mapsto (\wn,\meanU,\meanI)$ has any chance to be a diffeomorphism. 
A `natural' condition for this to occur turns out to depend on the Hessian of 
\begin{equation}\label{eq:defTheta}
\Theta(\mu,\blambda,\speed)
:=\int_{0}^{\Xi} (\Ham(\ubU,\uv_x)+\speed \Impulse(\ubU) + \lambda\cdot \ubU +\mu)\,\dif x\,,
\end{equation}
as a function of its $(N+2)$ variables.
This is because $\Theta(\mu,\blambda,\speed)$ coincides with the \emph{action}
of the profile ODEs \eqref{eq:EL}, when viewed as a Hamiltonian system associated with the Hamiltonian $\Legendre \Lag$. Indeed, by \eqref{eq:ELham}, we have 
$$ \uv_x\frac{\partial \Ham}{\partial v_x}(\ubU,\uv_x)\,-\, \Ham [\ubU]\,-\,\speed \Impulse(\ubU) - \lambda\cdot \ubU =\mu\,,$$
hence by change of variable
$$\Theta(\mu,\blambda,\speed)\,=\,\oint \frac{\partial \Ham}{\partial v_x}(\ubU,\uv_x)\,\dif v\,,$$
where the symbol $\oint$ stands for the integral in the $(v,v_x)$-plane along the orbit described by $\uv$.

\begin{proposition}
\label{prop:param}
Assume that $\Ham= \Ham(\bU,v_x)$ is smooth, and that we have a smooth mapping
$$(\mu, \blambda, \speed)\in \Omega \mapsto (\ubU,\Xi)\;\mbox{s.t.
\eqref{eq:EL}-\eqref{eq:ELham} hold true, and }\; \ubU(0)=\ubU(\Xi), \;\uv_x(0)=\uv_x(\Xi)=0\,.$$
Then the function $\Theta$ defined in \eqref{eq:defTheta} is also smooth, and we have
\begin{equation}
\label{eq:param}
\frac{\partial \Theta}{\partial \mu}\,=\,\Xi\,,\quad\frac{\partial \Theta}{\partial \speed}\,=\,\int_{0}^{\Xi} \Impulse (\ubU)\,\dif x\,,\quad \nabla_{\blambda}  \Theta\,=\,\int_{0}^{\Xi} \ubU\, \dif x\,.
\end{equation}
\end{proposition}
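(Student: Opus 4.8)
The plan is to differentiate the integral defining $\Theta$ directly, exploiting the fact that $\ubU$ satisfies the profile equations \eqref{eq:EL}-\eqref{eq:ELham} and that the endpoint conditions $\ubU(0)=\ubU(\Xi)$, $\uv_x(0)=\uv_x(\Xi)=0$ hold throughout $\Omega$. Write $\theta:=\Ham(\ubU,\uv_x)+\speed \Impulse(\ubU)+\blambda\cdot \ubU+\mu$ for the integrand, so that $\Theta=\int_0^{\Xi}\theta\,\dif x$. When we differentiate with respect to any parameter $q\in\{\mu,\speed,\lambda_1,\ldots,\lambda_N\}$, two contributions appear: a boundary term from the dependence of the upper limit $\Xi$ on $q$, and an interior term $\int_0^\Xi \partial_q\theta\,\dif x$ in which $\partial_q$ acts through both the explicit parameter dependence and the profile dependence $\partial_q\ubU$. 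The key simplification is that the profile-dependent part of the interior term vanishes once we integrate by parts and invoke \eqref{eq:EL}.

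\textbf{The interior term.} First I would show that for each parameter $q$,
$$
\int_0^\Xi \frac{\partial \theta}{\partial \ubU}\cdot \partial_q \ubU \,\dif x
\;=\;\int_0^\Xi \Euler(\Ham+\speed\Impulse)[\ubU]\cdot \partial_q\ubU\,\dif x
\;+\;\Big[\nabla_{\uv_x}\Ham\cdot \partial_q\uv\Big]_0^\Xi,
$$
the boundary contribution arising from integrating by parts the $\partial_q U_{\alpha,x}$ terms to convert $\partial/\partial U_{\alpha,x}$ into the total-derivative piece of the variational derivative $\Euler$. By \eqref{eq:EL} the first integral equals $-\int_0^\Xi \blambda\cdot\partial_q\ubU\,\dif x$, which exactly cancels the profile-dependent part of the explicit $\blambda\cdot\ubU$ term when $q\in\{\mu,\speed\}$ and combines cleanly when $q=\lambda_j$. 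The remaining integrated boundary term vanishes by $\Xi$-periodicity of $\ubU$ together with $\uv_x(0)=\uv_x(\Xi)=0$ (the latter guarantees the $\kappa(v)v_x$-type factor vanishes at the endpoints). After this cancellation, only the \emph{explicit} parameter dependence survives: $\partial_\mu\theta=1$, $\partial_\speed\theta=\Impulse(\ubU)$, and $\partial_{\lambda_j}\theta=U_j$.

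\textbf{The boundary term and conclusion.} The upper-limit contribution is $\theta|_{x=\Xi}\cdot \partial_q\Xi$. Here I would use \eqref{eq:ELham}, which says precisely $\Legendre\Lag[\ubU]=\mu$, i.e. $\uv_x\,\partial\Ham/\partial v_x-\theta+\mu=\mu$ along the orbit; evaluated at $x=\Xi$ where $\uv_x=0$ this forces $\theta|_{x=\Xi}=0$, so the boundary term vanishes identically. Thus for every parameter $q$ we obtain $\partial_q\Theta=\int_0^\Xi \partial_q^{\mathrm{expl}}\theta\,\dif x$, yielding the three claimed identities in \eqref{eq:param} directly. Smoothness of $\Theta$ follows from smoothness of the mapping $(\mu,\blambda,\speed)\mapsto(\ubU,\Xi)$ assumed in the hypothesis together with smoothness of $\Ham$, since $\theta$ is then a smooth function of the parameters and the integration over $[0,\Xi]$ preserves smoothness.

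\textbf{The main obstacle.} I expect the delicate point to be the careful bookkeeping of the boundary terms generated by differentiating the profile $\ubU$ and integrating by parts: one must verify that \emph{all} such terms vanish, which relies crucially on the normalization $\uv_x(0)=\uv_x(\Xi)=0$ and on periodicity, and that $\theta$ itself vanishes at the endpoint via \eqref{eq:ELham}. The algebra is routine once one is confident these boundary contributions cancel, but it is exactly here that the stated endpoint conditions are indispensable—dropping them would leave spurious terms and destroy the clean identities \eqref{eq:param}.
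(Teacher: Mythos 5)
Your proposal is correct and follows essentially the same route as the paper's proof: differentiate under the integral, kill the upper-limit contribution by noting that \eqref{eq:ELham} forces the integrand to equal $\uv_x\,\partial\Ham/\partial v_x$, hence to vanish where $\uv_x=0$, and kill the profile-variation terms by integrating by parts and invoking \eqref{eq:EL}, with the endpoint conditions disposing of the boundary terms. The only cosmetic difference is that the paper absorbs the $\blambda\cdot\ubU$ term into the Lagrangian so that the interior term is directly $\int_0^\Xi \ubU_a\cdot\Euler\Lag[\ubU]\,\dif x=0$, whereas you phrase the same fact as a cancellation against the explicit $\blambda$ term.
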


\begin{proof}
This is a calculus exercise. Denoting for simplicity by $\emm$ the function 
$$\emm(\bU,v_x;\speed,\blambda,\mu):=\Lag(\bU,v_x;\speed,\blambda)\,+\,\mu\,=\, \Ham(\bU,v_x)+\speed \Impulse(\bU) + \lambda\cdot \bU +\mu\,,$$
if $a$ is any of the parameters $\speed,\lambda_\alpha,\mu$, and if we denote by a subscript derivation with respect to $a$,
we have 
$$\Theta_a\,=\,\begin{array}[t]{l}\displaystyle \int_{0}^{\Xi} \emm_a(\ubU,\uv_x;\speed,\blambda,\mu)\,\dif x\,+\,
\Xi_a\,\emm(\ubU(\Xi),\uv_x(\Xi);\speed,\blambda,\mu)\\  [5pt]
\displaystyle\,+\,\int_{0}^{\Xi} \Big(\ubU_a\cdot \nabla_{\bU} \emm(\ubU,\uv_x;\speed,\blambda,\mu)\,+\,\uv_{x,a}\,\frac{\partial \Ham}{\partial v_x}(\ubU,\uv_x)\Big)\Big)\dif x.\end{array}$$
The announced formulas rely on the observation that all but the first term in the right-hand side here above equal zero.
To show this, let us insist on the fact that, by \eqref{eq:ELham},
$$\emm(\ubU,\uv_x;\speed,\blambda,\mu)= \uv_x \,\frac{\partial \Ham}{\partial v_x}(\ubU,\uv_x)\,.$$
Since $\uv_x(\Xi)=0$, we thus readily see that $\emm(\ubU(\Xi),\uv_x(\Xi);\speed,\blambda,\mu)=0$. In order to deal with the last, integral term in $\Theta_a$, we observe that 
$\uv_{x,a}=\partial_x \uv_a$, and make an integration by parts, in which the boundary terms cancel out, again because
$\uv_x(0)=\uv_x(\Xi)=0$. This yields
$$\int_{0}^{\Xi} \Big(\ubU_a\cdot \nabla_{\bU} \emm(\ubU,\uv_x;\speed,\blambda,\mu)\,+\,\uv_{x,a}\,\frac{\partial \Ham}{\partial v_x}(\ubU,\uv_x)\Big)\Big)\dif x \,=\,
\int_{0}^{\Xi} \ubU_a\cdot \Euler \Lag[\ubU]\,\dif x\,,$$
which is equal to zero because of \eqref{eq:EL}.
\end{proof}

\begin{corollary}
\label{cor:param}
Under the assumptions of Proposition \ref{prop:param}, the mapping 
$(\mu, \blambda, \speed)\in \Omega \mapsto (\wn, \meanU,\meanI)$ is a diffeomorphism if and only if it is one-to-one and 
$$\det \Big(\Hess \Theta (\mu, \blambda, \speed)\Big)\,\neq\,0\,,\;\forall (\mu, \blambda, \speed)\in \Omega\,.$$
\end{corollary}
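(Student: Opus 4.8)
The plan is to recognize the target map as an explicit, invertible reshuffling of the gradient $\nabla\Theta$, thereby reducing the statement to the global inverse function theorem applied to $\Theta$ alone. Write $\Phi:(\mu,\blambda,\speed)\mapsto(\wn,\meanU,\meanI)$ for the map under scrutiny. Recalling $\wn=1/\Xi$, $\meanU=\wn\int_0^\Xi\ubU\,\dif x$ and $\meanI=\wn\int_0^\Xi\Impulse(\ubU)\,\dif x$, the identities \eqref{eq:param} of Proposition~\ref{prop:param} become
$$\frac{\partial\Theta}{\partial\mu}=\frac{1}{\wn},\qquad \nabla_\blambda\Theta=\frac{\meanU}{\wn},\qquad \frac{\partial\Theta}{\partial\speed}=\frac{\meanI}{\wn}.$$
In other words, ordering the coordinates as $(\mu,\blambda,\speed)$ and $(\wn,\meanU,\meanI)$, I would first establish the factorization $\Phi=G\circ\nabla\Theta$, where $G$ is the explicit map
$$G:\mathbf{p}=(p_0,p_1,\dots,p_{N+1})\longmapsto\Bigl(\tfrac{1}{p_0},\tfrac{p_1}{p_0},\dots,\tfrac{p_{N+1}}{p_0}\Bigr),$$
which is smooth on the half-space $\{p_0>0\}$. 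Since $\partial\Theta/\partial\mu=\Xi>0$, the whole image of $\nabla\Theta$ lands in this half-space.

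The second step is to note that $G$ is itself a diffeomorphism from $\{p_0>0\}$ onto $\{\wn>0\}$: it is a smooth bijection with the visibly smooth inverse $(\wn,\meanU,\meanI)\mapsto(1/\wn,\meanU/\wn,\meanI/\wn)$, and its differential $\Dif G$ is lower triangular with nonvanishing diagonal, hence $\det\Dif G\neq0$ throughout $\{p_0>0\}$. By the chain rule $\Dif\Phi=\Dif G(\nabla\Theta)\,\Hess\Theta$, so the invertibility of $\Dif G$ yields, at every $(\mu,\blambda,\speed)\in\Omega$, the pointwise equivalence
$$\det\bigl(\Dif\Phi\bigr)\neq0\quad\Longleftrightarrow\quad\det\bigl(\Hess\Theta\bigr)\neq0.$$
Likewise, $G$ being injective on $\{p_0>0\}$, the map $\Phi$ is one-to-one if and only if $\nabla\Theta$ is; but this last translation is not even needed, since injectivity can simply be carried along as a property of $\Phi$ itself.

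Finally, I would invoke the standard global form of the inverse function theorem: a smooth map between open subsets of $\R^{N+2}$ is a diffeomorphism onto its (necessarily open) image if and only if it is injective and its differential is everywhere invertible. The forward implication comes from differentiating the identity $\Phi^{-1}\circ\Phi=\mathrm{id}$; for the converse, everywhere-invertibility of $\Dif\Phi$ makes $\Phi$ an open local diffeomorphism (note that $\Phi$ is smooth, being the composite of the smooth map $\nabla\Theta$ from Proposition~\ref{prop:param} with $G$), and an injective open local diffeomorphism is automatically a diffeomorphism onto its image, the local smooth inverses patching into one global smooth inverse. Applying this criterion to $\Phi$ and substituting the equivalence above for the condition $\det(\Dif\Phi)\neq0$ gives exactly the stated result. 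The only genuinely delicate point is this passage from local to global invertibility, i.e. upgrading injectivity together with a nonvanishing Jacobian to a bona fide diffeomorphism; all the rest is bookkeeping around the factorization $\Phi=G\circ\nabla\Theta$.
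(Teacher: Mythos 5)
Your proof is correct and follows essentially the same route as the paper: the paper likewise reduces the normalized map $(\wn,\meanU,\meanI)$ to the unnormalized one $(\Xi,\int_0^\Xi\ubU\,\dif x,\int_0^\Xi\Impulse(\ubU)\,\dif x)$ (your composition with $G$), identifies the latter with $\nabla\Theta$ via Proposition \ref{prop:param} so that its Jacobian is $\Hess\Theta$, and leaves the global inverse function theorem step implicit where you spell it out.
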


\begin{proof} The mapping 
$(\mu, \blambda, \speed)\in \Omega \mapsto (\wn=1/\Xi, \meanU=\wn \int_{0}^{\Xi} \ubU \dif x,\meanI=\wn \int_{0}^{\Xi} \Impulse(\ubU) \dif x)$ is clearly a diffeomorphism if and only if 
$$\textstyle(\mu, \blambda, \speed)\in \Omega \mapsto (\Xi, \int_{0}^{\Xi} \ubU \,\dif x,\int_{0}^{\Xi} \Impulse(\ubU) \,\dif x)$$ is so.
By  Proposition \ref{prop:param}, we have that
$$\textstyle(\Xi, \int_{0}^{\Xi} \uU_1\,\dif x\,,\ldots,\,\int_{0}^{\Xi} \uU_N\, \dif x\,,\,\int_{0}^{\Xi} \Impulse(\ubU) \,\dif x)^\transp=\nabla \Theta(\mu, \blambda, \speed)\,,$$ and the Jacobian matrix of $(\mu, \blambda, \speed)\mapsto \nabla \Theta(\mu, \blambda, \speed)$  is by definition the Hessian of $\Theta$.
\end{proof}

Let us assume that $(\mu, \blambda, \speed)\in \Omega \mapsto (\wn, \meanU,\meanI)$ is indeed a diffeomorphism. Then periodic wave profiles may be parametrized by $(\wn, \meanU,\meanI)$ instead of $(\mu, \blambda, \speed)$. In what follows, we make the dependence on  $(\wn, \meanU,\meanI)$ explicit by denoting such profiles by $\ubU^{(\wn,\meanU,\meanI)}$, which in addition we \emph{rescale} so that they all have the same period, say one.
Then each of them is associated with a travelling wave solution to \eqref{eq:absHam} by setting $$\bU(t,x)=\ubU^{(\wn,\meanU,\meanI)}(\wn x+\freq(\wn,\meanU,\meanI) t)\,,$$ of speed
$\speed=\speed(\wn,\meanU,\meanI)$, and {time frequency}  $\freq= \freq(\wn,\meanU,\meanI):=-\wn\,\speed(\wn,\meanU,\meanI)$.

We are interested in solutions to \eqref{eq:absHam} taking the form of slowly modulated wave trains
$$\bU(t,x)\ =\ \ubU^{(\wn,\meanU,\meanI)(\varepsilon t, \varepsilon x)}\big(\,\tfrac{1}{\varepsilon}\,{\phi({\varepsilon t},{\varepsilon x})}\,\big)\,+\,\mathcal{O}(\varepsilon)\,, $$
with $\phi=\phi(T,X)$ such that $\phi_X=\wn$ and $\phi_T=\freq$. 
(Note that when $(\wn,\meanU,\meanI)$ is independent of  $(T,X)$, we just recover exact, periodic travelling wave solutions.)
Whitham's averaged equations consist of conservation laws for 
$(\wn,\meanU,\meanI)=(\wn,\meanU,\meanI)(T,X)$ obtained by formal asymptotic expansions.
In fact, the equation on $\wn$ is just obtained by the Schwarz lemma applied to the phase $\phi$, \begin{equation}
\label{eq:Whithamk}
\partial_T k + \partial_X(ck)=0\,.
\end{equation}
The equations on $\meanU$ and $\meanI$ are derived by plugging the more precise ansatz 
$$\bU(t,x)=\bU^0(\varepsilon t, \varepsilon x, \phi(\varepsilon t, \varepsilon x)/\varepsilon)\,+\,
\varepsilon\,\bU^1(\varepsilon t, \varepsilon x, \phi(\varepsilon t, \varepsilon x)/\varepsilon,\varepsilon)\,+\,o(\varepsilon)\,,$$
in \eqref{eq:absHam} and \eqref{eq:impulsecl} respectively, assuming that $\bU^0$ and $\bU^1$ are $1$-periodic in their third variable $\theta$ (the rescaled phase).
The $O(1)$ terms vanish provided that $$\bU^0(T,X,\theta)=\ubU^{(\wn,\meanU,\meanI)(T,X)}(\theta)\,.$$
With this choice, the $O(\varepsilon)$ terms involving $\bU^1$ cancel out when averaging, and we receive the equations
\begin{equation}
\label{eq:WhithamU}
\partial_T \meanU= \bB \partial_X \langle \Euler \Ham_k[\ubU^{(\wn,\meanU,\meanI)}] \rangle\,,
\end{equation}
\begin{equation}
\label{eq:WhithamI}
\partial_T \meanI=  \partial_X \langle \ubU\cdot \Euler \Ham_k[\ubU^{(\wn,\meanU,\meanI)}] \,+\,\Legendre \Ham_k[\ubU^{(\wn,\meanU,\meanI)}]\rangle\,.
\end{equation}
Here above, we have used the shortcut $\Ham_k:=\Ham(\bU,k\bU_\theta)$, and the Euler operator $\Euler$ and Legendre transform $\Legendre$ act as operators on functions of the rescaled variable $\theta$.
Of course we may simplify and write
$\langle \Euler \Ham_k[\ubU] \rangle= \langle \nabla_{\bU} \Ham_k(\ubU,k\ubU_\theta) \rangle$ in \eqref{eq:WhithamU}.
However, this is not as nice a simplification as the reformulation of the averaged equations given below. 

\begin{proposition}\label{prop:Whitham-action}
Under the assumptions of Proposition \ref{prop:param}, the system of equations in \eqref{eq:Whithamk}-\eqref{eq:WhithamU}-\eqref{eq:WhithamI} equivalently reads, as far as smooth solutions are concerned,
\begin{equation}
\label{eq:Whitham}\left\{\begin{array}{lclclr}
\partial_T\Big(\dfrac{\partial \Theta}{\partial \mu}\Big) & + &  \speed\, \partial_X\Big(\dfrac{\partial \Theta}{\partial \mu}\Big) & - & \Big(\dfrac{\partial \Theta}{\partial \mu}\Big)\,\partial_X\speed & =0\,,\\ [10pt]
 \partial_T\big(\nabla_{\blambda}\Theta\big) & + &  \speed\, \partial_X\big(\nabla_{\blambda}\Theta\big) & + & \Big(\dfrac{\partial \Theta}{\partial \mu}\Big)\,\bB\,\partial_X\blambda & =0\,,\\ [10pt]
 \partial_T\Big(\dfrac{\partial \Theta}{\partial \speed}\Big) & + &  \speed\, \partial_X\Big(\dfrac{\partial \Theta}{\partial \speed}\Big) & - & \Big(\dfrac{\partial \Theta}{\partial \mu}\Big)\,\partial_X\mu & =0\,.\\
\end{array}\right.
\end{equation}
or in quasilinear form,
\begin{equation}
\label{eq:Whithamql}
\bSigma\, \partial_T\bW + (\speed\, \bSigma\,+\,\Theta_\mu\bS)\partial_X\bW =0
\end{equation} with
$\bW^\transp:= (\mu,\blambda^\transp,\speed)$, $\bSigma:=\Hess\Theta$,  $\Theta_\mu= \dfrac{\partial \Theta}{\partial \mu}$ (at constant $\blambda$, $\speed$),
$$ \bS\,:=\,\left(\begin{array}{r|ccc|r} 0 & 0 & \cdots &  0 & -1 \\ \hline
0 &  & &  & 0 \\  [-6pt] 
\vdots & & \bB  & & \vdots \\ 
0 &  & &  & 0 \\ \hline
- 1 & 0 & \cdots & 0 & 0 \end{array}\right) \,.$$
\end{proposition}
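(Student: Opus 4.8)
The plan is to derive \eqref{eq:Whitham} from the three averaged conservation laws \eqref{eq:Whithamk}, \eqref{eq:WhithamU}, \eqref{eq:WhithamI} by two ingredients: the dictionary between densities and derivatives of $\Theta$ furnished by Proposition \ref{prop:param}, and the reduction of the averaged fluxes by means of the profile equations \eqref{eq:EL}--\eqref{eq:ELham}. First I would record that, since the profiles are rescaled to unit period, the average $\langle \cdot \rangle$ over $\theta\in[0,1]$ equals $\wn\int_0^\Xi \cdot\,\dif x$; hence Proposition \ref{prop:param} reads $\Theta_\mu=\Xi=1/\wn$, $\nabla_\blambda\Theta=\meanU/\wn=\Theta_\mu\,\meanU$, and $\Theta_\speed=\meanI/\wn=\Theta_\mu\,\meanI$, while $\langle\ubU\rangle=\meanU$ and $\langle\Impulse(\ubU)\rangle=\meanI$. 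This is what allows me to pass freely between the densities $(\wn,\meanU,\meanI)$ and the gradient components of $\Theta$.

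Second, I would simplify the fluxes. Along the profile, \eqref{eq:EL} gives $\Euler\Ham[\ubU]=-\speed\,\bB^{-1}\ubU-\blambda$ pointwise, so averaging yields $\langle\Euler\Ham_\wn[\ubU]\rangle=-\speed\,\bB^{-1}\meanU-\blambda$, and since $\bB$ is constant \eqref{eq:WhithamU} collapses to the conservative form $\partial_T\meanU+\partial_X(\speed\,\meanU)+\bB\,\partial_X\blambda=0$. Likewise, the flux in \eqref{eq:WhithamI} is exactly $\langle\Impulseflux[\ubU]\rangle$; combining \eqref{eq:ELham} with the relation $\Legendre\Lag[\ubU]=\Impulseflux[\ubU]+\speed\,\Impulse[\ubU]$ recorded earlier gives $\Impulseflux[\ubU]=\mu-\speed\,\Impulse(\ubU)$ pointwise, whence $\langle\Impulseflux[\ubU]\rangle=\mu-\speed\,\meanI$ and \eqref{eq:WhithamI} collapses to $\partial_T\meanI+\partial_X(\speed\,\meanI)-\partial_X\mu=0$. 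Finally, rewriting \eqref{eq:Whithamk} through $\Theta_\mu=1/\wn$ reproduces verbatim the first line of \eqref{eq:Whitham}.

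Third comes the one genuinely algebraic step, converting these two conservative laws into the non-conservative $\Theta$-gradient lines of \eqref{eq:Whitham}. Writing $\nabla_\blambda\Theta=\Theta_\mu\,\meanU$ and expanding by the product rule, I would compute $\partial_T(\Theta_\mu\meanU)+\speed\,\partial_X(\Theta_\mu\meanU)=\Theta_\mu(\partial_T\meanU+\speed\,\partial_X\meanU)+\meanU(\partial_T\Theta_\mu+\speed\,\partial_X\Theta_\mu)$, then substitute the already-established first line of \eqref{eq:Whitham}, namely $\partial_T\Theta_\mu+\speed\,\partial_X\Theta_\mu=\Theta_\mu\,\partial_X\speed$, to recombine the right-hand side as $\Theta_\mu(\partial_T\meanU+\partial_X(\speed\,\meanU))$; adding $\Theta_\mu\bB\,\partial_X\blambda$ and dividing by $\Theta_\mu\neq 0$ returns exactly the conservative law for $\meanU$, so the middle line of \eqref{eq:Whitham} holds. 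The identical manipulation applied to $\Theta_\speed=\Theta_\mu\,\meanI$ yields the third line. I expect this bookkeeping to be the main (if modest) obstacle: it is the only place where the three laws become coupled, and one must track that it is precisely the wave-number equation which supplies the extra $\partial_X\speed$ terms needed to turn divergence form into the $\Theta$-gradient form.

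Finally, for the quasilinear form \eqref{eq:Whithamql} I would stack the three lines of \eqref{eq:Whitham} with $\bW=(\mu,\blambda^\transp,\speed)^\transp$. Because the density vector is $\nabla\Theta=(\Theta_\mu,(\nabla_\blambda\Theta)^\transp,\Theta_\speed)^\transp$, the chain rule gives $\partial_T(\nabla\Theta)=\bSigma\,\partial_T\bW$ and $\partial_X(\nabla\Theta)=\bSigma\,\partial_X\bW$ with $\bSigma=\Hess\Theta$, producing the block $\bSigma\,\partial_T\bW+\speed\,\bSigma\,\partial_X\bW$. The leftover terms $-\Theta_\mu\partial_X\speed$, $\Theta_\mu\bB\,\partial_X\blambda$, $-\Theta_\mu\partial_X\mu$ factor as $\Theta_\mu\,\bS\,\partial_X\bW$ once one reads off that $\bS$ must send the last entry of $\partial_X\bW$ into $-\partial_X\speed$ in the first slot, act by $\bB$ on the middle block, and send the first entry into $-\partial_X\mu$ in the last slot, which is exactly the displayed anti-diagonal-plus-$\bB$ matrix. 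This completes the identification $\bSigma\,\partial_T\bW+(\speed\,\bSigma+\Theta_\mu\bS)\,\partial_X\bW=0$.
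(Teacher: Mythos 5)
Your proposal is correct and follows essentially the same route as the paper's proof: invoke Proposition \ref{prop:param} to identify $(\Xi,\meanU/\wn,\meanI/\wn)$ with $\nabla\Theta$, use the profile relations \eqref{eq:EL}--\eqref{eq:ELham} to reduce the averaged fluxes to $-\speed\bB^{-1}\meanU-\blambda$ and $\mu-\speed\Impulse(\ubU)$, and then eliminate the factor $\wn$ via the wave-number equation \eqref{eq:Whithamk} — your explicit product-rule bookkeeping is exactly what the paper summarizes as ``we eliminate the factor $k$ by using again \eqref{eq:Whithamk}.'' The passage to the quasilinear form \eqref{eq:Whithamql}, which the paper leaves implicit, is also handled correctly.
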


\begin{proof}
Recalling that 
$$\dfrac{\partial \Theta}{\partial \mu}\,=\,\Xi\,=\,1/\wn\,,$$
and multiplying Eq.~\eqref{eq:Whithamk} by $-\Xi^2$, we readily obtain the first equation in \eqref{eq:Whitham}.
The other ones require a little more manipulations.
Regarding \eqref{eq:WhithamU}, we use that
$$\meanU= \wn \nabla_\blambda \Theta\,,$$
that by the profile equation \eqref{eq:EL} (after rescaling),
$$\Euler \Ham_k[\ubU^{(\wn,\meanU,\meanI)}]\,=\,-\speed\, \bB^{-1} \ubU^{(\wn,\meanU,\meanI)} \,-\,\blambda\,,$$
hence 
$$\bB\,\langle\Euler \Ham_k[\ubU^{(\wn,\meanU,\meanI)}]\rangle\,=\,-\speed\,\meanU \,-\,\bB\blambda\,,$$
and we eliminate the factor $k$ by using again \eqref{eq:Whithamk}. We proceed in a similar manner for 
\eqref{eq:WhithamI}, using that 
$$\meanI=\wn \;\Big(\dfrac{\partial \Theta}{\partial \speed}\Big)\,,$$
and that by the profile equations in \eqref{eq:EL}-\eqref{eq:ELham},
$$\ubU\cdot \Euler \Ham_k[\ubU^{(\wn,\meanU,\meanI)}] \,+\,\Legendre \Ham_k[\ubU^{(\wn,\meanU,\meanI)}]\,=\,
\mu\,-\,\speed\, \Impulse[\ubU^{(\wn,\meanU,\meanI)}]\,.$$
\end{proof}

\begin{remark} Would $\bSigma=\Hess \Theta$ be positive definite, \eqref{eq:Whitham} would automatically belong to the class of \emph{symmetrizable hyperbolic} systems, in view of its quasilinear form of \eqref{eq:Whithamql}.
However, as we shall see in Section \ref{s:necessary} (Theorem \ref{thm:unstab}), the definiteness of $\Hess \Theta$ is often incompatible with co-periodic stability. In other words, despite the nice, `symmetric' form of
the modulated equations \eqref{eq:Whitham},
their well-posedness is far from being automatic, especially in case of co-periodic stability.  The simultaneous occurrence of modulational stability and co-periodic stability remains possible though. This is in contrast with the framework of `quasi-gradient systems' considered in  \cite{PoganScheelZumbrun}, for which it has been shown that co-periodic and modulational stability are indeed incompatible.
\end{remark}

\section{Necessary conditions for stability}\label{s:necessary}

\subsection{Co-periodic instability criteria}
\begin{theorem}\label{thm:unstab} Under 
the structural conditions in \eqref{structure1}-\eqref{structure2}-\eqref{structure3}, and 
the assumptions of Proposition~\ref{prop:param},
\begin{itemize}
\item for $N=1$, if 
$\det (\Hess\Theta)>0$ then the wave is spectrally unstable with respect to co-periodic perturbations;
\item for $N=2$, if $\det (\Hess\Theta)<0$ 
then the wave is spectrally unstable with respect to co-periodic perturbations.
\end{itemize}
\end{theorem}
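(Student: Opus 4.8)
The plan is to read off co-periodic spectral stability from the real-valued Evans function $\tau\mapsto\Evans(\tau,0)=\det(\Fund(\Xi;\tau)-1)$, and to manufacture a zero of this function on $(0,+\infty)$: such a zero is a genuine positive real eigenvalue of $\Lin$, which lies off the imaginary axis and therefore forces instability. Since the operator $\Linco$ is real, $\Evans(\cdot,0)$ can be taken real-analytic and real-valued on $\R$, so that, exactly as indicated above, it suffices to show that its sign for $0<\tau\ll1$ is opposite to its sign for $\tau\gg1$ and then invoke the intermediate value theorem. Two asymptotic regimes must thus be analysed: the high-frequency behaviour as $\tau\to+\infty$ and the low-frequency behaviour as $\tau\to0^+$.

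First I would fix the high-frequency sign. As $\tau\to+\infty$ along the real axis, the spatial (Floquet) exponents of the first-order system of $N+3$ ODEs equivalent to $\Lin\bU=\tau\bU$ escape to infinity at a rate dictated by the principal symbol, that is by $\kappa(\uv)>0$ and $\bB$. Sorting these exponents by the sign of their real part and estimating $\det(\Fund(\Xi;\tau)-1)$ through the product of the corresponding $\ee^{\rho_j\Xi}-1$, one extracts a fixed sign $\sigma_\infty$ for $\Evans(\tau,0)$ when $\tau\gg1$, depending only on $N$ (through the parity of the number of decaying modes) and on the structural constants. This step is in essence a constant-coefficient computation and should be routine.

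The heart of the matter is the low-frequency expansion. The point $\tau=0$ is always a zero of $\Evans(\cdot,0)$ since $\Lin\ubU_x=0$; the order of this zero is the algebraic multiplicity of $0$ as a co-periodic eigenvalue, and I would compute the first non-vanishing Taylor coefficient and show its sign equals $\mathrm{sgn}(\det\Hess\Theta)$ up to an explicit, $N$-dependent factor $\sigma_0$. The mechanism is that $\ker\Lin$ on $L^2(\R/\Xi\Z)$ consists of the periodic $\phi$ with $\Linvar\phi$ constant, while the Jordan structure above it is built from the parameter derivatives $\ubU_\mu,\ubU_{\lambda_\beta},\ubU_\speed$, which by differentiation of \eqref{eq:EL} satisfy $\Linvar\ubU_\mu=0$, $\Linvar\ubU_{\lambda_\beta}=-e_\beta$ and $\Linvar\ubU_\speed=-\bB^{-1}\ubU$, whence $\Lin\ubU_\speed=-\ubU_x$. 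These derivatives fail to be co-periodic precisely by multiples of $\ubU_x$, the defect being $\ubU_a(\cdot+\Xi)-\ubU_a=-\Xi_a\,\ubU_x$; selecting the genuinely periodic combinations and measuring the residual solvability obstructions reproduces exactly the Jacobian of $(\mu,\blambda,\speed)\mapsto(\Xi,\int_0^\Xi\ubU\,\dif x,\int_0^\Xi\Impulse(\ubU)\,\dif x)$, which by Proposition~\ref{prop:param} is $\Hess\Theta$. Thus $\det\Hess\Theta$ governs both the order of vanishing and, at that order, the sign of the leading coefficient.

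Combining the two regimes, the wave is co-periodically unstable as soon as the low-frequency sign $\sigma_0\,\mathrm{sgn}(\det\Hess\Theta)$ disagrees with the fixed high-frequency sign $\sigma_\infty$; tracking the $N$-dependent factors then produces the opposite thresholds $\det\Hess\Theta>0$ for $N=1$ and $\det\Hess\Theta<0$ for $N=2$. I expect the main obstacle to be the low-frequency step: one must verify that the relevant parameter derivatives are independent and assemble into $\Hess\Theta$ with the correct sign bookkeeping, while keeping track of the unavoidable negative eigenvalue of $\Linvar$ furnished by the Sturm--Liouville argument and of the non-co-periodicity of the $\ubU_a$. Relating the analytic order of vanishing of $\Evans(\cdot,0)$ at $0$ to this algebraic and geometric data --- rather than merely to the geometric kernel --- is where the delicate signature counting lives, and where the difference between $N=1$ and $N=2$ ultimately originates.
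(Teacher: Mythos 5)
The paper itself does not prove Theorem~\ref{thm:unstab}: it delegates the case $N=1$ to Bronski--Johnson \cite{BronskiJohnson} and the case $N=2$ to \cite{BR}, and only records that the argument is ``an Evans function computation'' in which the sign criteria ``stem from a mod 2 count'' of real positive eigenvalues. Your strategy --- take $\Evans(\cdot,0)=\det(\Fund(\Xi;\cdot)-1)$ real-valued, compare its sign as $\tau\to+\infty$ with the sign of its leading Taylor coefficient at $\tau=0$, and apply the intermediate value theorem --- is exactly that method, and the ingredients you list for the low-frequency step (the identities $\Linvar\ubU_\mu=0$, $\Linvar\ubU_{\lambda_\beta}=-e_\beta$, $\Linvar\ubU_{\speed}=-\bB^{-1}\ubU$, the period defect $\ubU_a(\cdot+\Xi)-\ubU_a=-\Xi_a\,\ubU_x$, and the identification of the Jacobian of $(\mu,\blambda,\speed)\mapsto(\Xi,\int_0^\Xi\ubU\,\dif x,\int_0^\Xi\Impulse(\ubU)\,\dif x)$ with $\Hess\Theta$ via Proposition~\ref{prop:param}) are precisely the ones the paper exploits elsewhere, e.g.\ in the proof of Theorem~\ref{thm:PSZ}.

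That said, as it stands your text is a plan rather than a proof: both decisive computations are asserted, not performed. The high-frequency sign $\sigma_\infty$ is not actually ``routine'' bookkeeping of $\prod_j(\ee^{\rho_j\Xi}-1)$ --- one must check that no cancellation or oscillation spoils a definite sign for large real $\tau$, and this is where the $N$-dependence of the threshold partly originates. More seriously, the statement that the first nonvanishing Taylor coefficient of $\Evans(\cdot,0)$ at $\tau=0$ equals $\sigma_0\det(\Hess\Theta)$ \emph{is} the content of the theorem; to establish it you must first pin down the order of vanishing (equivalently the algebraic multiplicity of the co-periodic eigenvalue $0$, which is where the hypothesis on the generalized kernel, cf.\ Theorem~\ref{thm:side-band}, enters), then expand $\Fund(\Xi;\tau)$ to that order using the non-periodic solutions generated by $\ubU_\mu,\ubU_{\lambda_\beta},\ubU_{\speed}$ together with the solutions of $\Lin\bU=\tau\bU$ bifurcating from them, and only then does the determinant collapse onto $\det(\Hess\Theta)$ with a computable prefactor. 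You correctly flag this as ``where the delicate signature counting lives,'' but flagging it does not discharge it: if extra Jordan structure is present, or if the prefactor $\sigma_0\sigma_\infty$ is computed with the wrong parity, the inequalities in the statement reverse. So the route is the right one and matches the references the paper relies on, but the two sign computations that carry all the content remain to be done.
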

The first point  is a slight generalization ~-~with variable $\kappa(v)$~-~ of what was shown by 
Bronski and Johnson \cite{BronskiJohnson}.
The second point has been shown in \cite{BR} by means of an Evans function computation.

In both cases, the detected instability corresponds to a real positive unstable eigenvalue. Indeed, the sign criteria 
here above stem from a mod 2 count of such eigenvalues.

\subsection{Modulational instability implies side-band instability}

A necessary condition for spectral stability is modulational stability. This was shown by Serre \cite{Serre}, and by Oh and Zumbrun \cite{OhZumbrun10} for viscous periodic waves. In our framework, we have the following 
\begin{theorem}\label{thm:side-band} Assume that $\ubU$ is a periodic travelling wave profile, that the set of nearby profiles  is, up to translations, an $(N+2)$-dimensional manifold parametrized by $(\mu,\speed,\blambda)$, and that the generalized kernel of $\Lin$ in the space of $\Xi$-periodic functions is of dimension $N+2$. Then the system of modulated equations in \eqref{eq:Whithamk}-\eqref{eq:WhithamU}-\eqref{eq:WhithamI}, or equivalently \eqref{eq:Whitham}, is indeed an evolution system (in other words, $\Hess\Theta$ is nonsingular), and if it admits a nonreal characteristic speed then for any small enough Floquet exponent $\nu$, the operator $\Lin^\nu$ admits a (small) unstable eigenvalue.
\end{theorem}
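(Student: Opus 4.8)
The claim is that modulational instability, defined as the appearance of a nonreal characteristic speed of the first-order system \eqref{eq:Whithamql}, forces the existence of small unstable eigenvalues of $\Lin^\nu$ for small $\nu$. The natural strategy is a perturbative spectral analysis near the origin of the complex plane, matching the low-frequency expansion of the Evans function (or more intrinsically, of the spectral curves $\tau(\nu)$) against the characteristic polynomial of the Whitham system. The heuristic underlying the whole theorem is that the slow modulation dynamics captures precisely the behaviour of the small eigenvalues of $\Lin$ that bifurcate from $\tau=0$ as the Floquet exponent $\nu$ moves away from $0$; so I expect the dispersion relation of \eqref{eq:Whithamql}, in the long-wave limit, to govern the leading-order asymptotics $\tau\approx i\nu\, \xi$ where $\xi$ ranges over the characteristic speeds. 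A nonreal characteristic speed then immediately yields $\Real\,\tau>0$ for one branch and small $\nu$, i.e.\ an unstable eigenvalue.

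\textbf{Key steps, in order.} First I would set up the Floquet-parametrized eigenvalue problem $\Lin^\nu \bU^\nu = \tau \bU^\nu$ on $L^2(\R/\Xi\Z)$ and record that at $(\tau,\nu)=(0,0)$ the operator $\Lin=\sJ\Linvar$ has a generalized kernel of dimension exactly $N+2$, as hypothesized. I would exhibit an explicit basis of this generalized kernel: the genuine kernel is spanned by $\ubU_x$ (translation, since $\Linvar\ubU_x=0$ and $\sJ$ annihilates the resulting constant) together with the $N$ derivatives $\partial_{\lambda_\alpha}\ubU$ coming from the conservation laws, and the remaining Jordan vectors arise from differentiating the profile relations \eqref{eq:EL}--\eqref{eq:ELham} with respect to the parameters $(\mu,\blambda,\speed)$. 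Concretely, differentiating \eqref{eq:EL} in $\speed$ gives $\Linvar\ubU_\speed = -\nabla\Impulse(\ubU)$, so that $\sJ\Linvar\ubU_\speed=\partial_x\ubU$, realizing the rank-one Jordan structure that mirrors the translation eigenvector; analogous identities in $\mu$ and in each $\lambda_\alpha$ build up the full $(N+2)$-dimensional generalized kernel. The crux is that these generating vectors are exactly the profile derivatives that appear when one differentiates the averaged quantities $(\wn,\meanU,\meanI)$, hence the bridge to $\Hess\Theta$ supplied by Proposition~\ref{prop:param}.

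\textbf{The reduction and the matching.} Next, for $(\tau,\nu)$ small, I would perform a Lyapunov--Schmidt reduction onto this $(N+2)$-dimensional generalized kernel, reducing the spectral problem to a finite-dimensional determinant $\Delta(\tau,\nu)=\det E(\tau,\nu)=0$, where $E$ is the $(N+2)\times(N+2)$ matrix of the reduced operator in the chosen basis. The decisive computation is the leading-order expansion of $E$: I expect, after the right normalization, an expansion of the form $E(\tau,\nu)= i\nu\,\Theta_\mu\,\bS + \tau\,\bSigma + \text{(higher order)}$ up to an overall invertible change of basis, so that the reduced dispersion relation is, to leading order, $\det\!\big(\tau\,\bSigma + i\nu\,\Theta_\mu\,\bS\big)=0$. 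Comparing with the quasilinear Whitham form \eqref{eq:Whithamql}, whose characteristic speeds $\xi$ solve $\det(\xi\,\bSigma - (\speed\,\bSigma+\Theta_\mu\bS))=0$, one reads off that the small eigenvalues satisfy $\tau \approx i\nu(\xi-\speed)$ to leading order, with $\xi$ a characteristic speed of the modulation system; the nonsingularity of $\bSigma=\Hess\Theta$, forced by the $(N+2)$-dimensional generalized kernel being no larger, guarantees that \eqref{eq:Whithamql} is a genuine evolution system and that the $N+2$ roots $\tau$ are simple and analytic in $\nu$ near $0$. If some $\xi$ is nonreal then the corresponding branch has $\Real\,\tau = -\nu\,\Imag\,\xi \neq 0$ for small $\nu\neq0$, and by the Hamiltonian quadruplet symmetry one of the two signs of $\nu$ produces $\Real\,\tau>0$, i.e.\ a genuine unstable eigenvalue of $\Lin^\nu$.

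\textbf{Main obstacle.} The hard part will be establishing rigorously that the reduced matrix $E(\tau,\nu)$ has the asserted leading symbol $\tau\,\bSigma + i\nu\,\Theta_\mu\,\bS$, i.e.\ that the finite-dimensional reduction of the spectral problem reproduces the Whitham dispersion relation \emph{exactly} at leading order and not merely up to an unidentified invertible factor. This requires a careful choice of bases for the generalized kernel and cokernel, and the systematic use of the profile identities together with Proposition~\ref{prop:param} to identify the entries of $E$ with second derivatives of $\Theta$ and with the matrix $\bS$; the delicate bookkeeping is in computing the $O(\tau)$ and $O(\nu)$ coefficients, which involve solvability conditions (Fredholm alternative) expressed as $L^2$ pairings of profile derivatives against the cokernel of $\Linvar$, and showing these pairings assemble precisely into $\bSigma$ and $\Theta_\mu\bS$. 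Controlling that the $O(\tau^2,\tau\nu,\nu^2)$ remainder does not corrupt the sign of $\Real\,\tau$ for the unstable branch is then a routine consequence of analyticity once the leading term is pinned down.
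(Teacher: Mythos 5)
The paper itself offers no proof of this theorem: it states only that the result ``is a concatenation of results shown in \cite{BNR}'' and defers entirely to that reference. Your outline is nevertheless the right one, and it matches both the strategy of the cited work and the paper's own description of side-band instability in \S\ref{ss:spect} (a zero of $\Evans(\cdot,\nu)$ bifurcating from the origin for $|\nu|\ll 1$): identify the $(N+2)$-dimensional generalized kernel of $\Lin$ at $(\tau,\nu)=(0,0)$ via parameter derivatives of the profile, perform a Lyapunov--Schmidt (or low-frequency Evans function) reduction, and show the reduced dispersion relation is $\det(\tau\,\bSigma+i\nu\,\wn\,\Theta_\mu\bS+\dots)=0$ up to an invertible factor, so that $\tau\sim i\nu \wn(\xi-\speed)$ with $\xi$ a Whitham characteristic speed. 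Two small points worth tightening. First, you conclude instability only for one sign of $\nu$; since the characteristic polynomial of \eqref{eq:Whithamql} has real coefficients, a nonreal speed $\xi$ comes with its conjugate $\overline{\xi}$, so for \emph{each} fixed small $\nu\neq 0$ one of the two corresponding branches already has $\Real\tau>0$ --- which is what the theorem asserts (``for any small enough $\nu$''). Second, your claim that the $N+2$ roots are ``simple and analytic in $\nu$'' is not guaranteed (characteristic speeds may collide); the correct statement is that the branches admit Puiseux expansions whose leading term is governed by the reduced symbol, which suffices for the sign conclusion. Also be careful that the profile derivatives $\ubU_{\lambda_\alpha}$, $\ubU_\speed$, $\ubU_\mu$ are not individually $\Xi$-periodic because the period varies with the parameters; one must take the fixed-period combinations $\ubU_{\lambda_\alpha}-(\Xi_{\lambda_\alpha}/\Xi_\mu)\ubU_\mu$, etc., as in the proof of Theorem \ref{thm:PSZ}, before they can serve as a basis of the generalized kernel. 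These are repairs of detail, not of strategy.
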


This is a concatenation of results shown in \cite{BNR}.

If $\Xi=\Theta_\mu\neq 0$ (which we have implicitly assumed up to now), the hyperbolicity of \eqref{eq:Whithamql} is equivalent, by change of frame and rescaling, to that of 
$$\bSigma\, \partial_T\bW \,+\,\bS\,\partial_X\bW =0\,.$$ Assuming that $\bSigma=\Hess \Theta$ is nonsingular and noting that $\bS$ is always nonsingular (because we have assumed that $\bB$ is so), we thus see that the local well-posedness of the  averaged equations in \eqref{eq:Whitham} is equivalent to the fact that $\bS^{-1} \bSigma$ is diagonalizable on $\R$. 
Theorem \ref{thm:side-band} here above shows that spectral stability implies at least that the eigenvalues of $\bS^{-1} \bSigma$ are real.

\paragraph{Case $N=1$} (KdV). We have $\bS^{-1}=\bS\in \R^{3\times3}$, and 
$$\bS^{-1} \bSigma\,=\,\left(\begin{array}{ccc}-\Theta_{\speed\mu} & -\Theta_{\speed\lambda} & -\Theta_{\speed\speed}\\
\Theta_{\lambda\mu} & \Theta_{\lambda\lambda} & \Theta_{\lambda\speed}\\
-\Theta_{\mu\mu} & -\Theta_{\mu\lambda} & -\Theta_{\mu\speed}\end{array}\right)\,.$$
Then a necessary criterion for spectral stability is that the discriminant of the characteristic polynomial of this matrix be nonnegative. This criterion depends only on the second-order derivatives of the action $\Theta$.

For the case $N=2$ (EK), a $4\times4$ matrix is be to analyzed, and a similar necessary criterion can be explicitly obtained in terms of second-order derivatives of the action $\Theta$.

\paragraph{Small-amplitude limit.} 
A necessary condition for modulational stability of small-amplitude waves is two-fold and requires:
1) the hyperbolicity of the reduced system obtained in the zero-dispersion limit;
2) the so-called Benjamin--Feir--Lighthill criterion.
We refer to \cite{BNR} for more details. Observe in particular that the first condition is trivial in the case $N=1$ (because all scalar, first order conservation laws are hyperbolic), and has hardly ever been noticed. In the case $N=2$, and in particular for the Euler--Korteweg system, it requires that the Euler system 
be 
hyperbolic at the mean value of the wave. This is a nontrivial condition, which rules out some of the periodic waves in the Euler--Korteweg system when it is endowed with, for instance, the van der Waals pressure law. As to the Benjamin--Feir--Lighthill criterion, it is famous for characterizing the unstable Stokes waves.

\section{Sufficient conditions for stability}\label{s:sufficient}
\subsection{Grillakis--Shatah--Strauss criteria}\label{ss:GSS}
We assume as before that $\Ham=\Ham(v,\bu,v_x)$, and use the short notation $\HH^s$ for 
$H^{s}(\R/\Xi\Z)\times (L^2
(\R/\Xi\Z))^{N-1}$. Observe in particular that the functional $$\funct^{(\speed,\blambda,\mu)}: \bU\mapsto \int_{0}^{\Xi} (\Ham(\bU,\bU_x)+\speed\Impulse(\bU)+\blambda\cdot \bU+\mu)\,\dif x$$
is well-defined on $\HH^1$.
What we call a Grillakis--Shatah--Strauss (GSS) criterion is a set of inequalities regarding the second derivatives of $\Theta$ ensuring that the functional $\funct^{(\speed,\blambda,\mu)}$ admits a local minimum at 
$\ubU$ (and any one of its translates) on $\HH^1\cap {\mathscr C}$ with  
$${\mathscr C}=\{\bU\in \HH^0\,;\;  \textstyle \int_{0}^{\Xi}\Impulse(\bU)\,\dif x=\int_{0}^{\Xi}\Impulse(\ubU)\,\dif x\,,
\int_{0}^{\Xi} \bU\,\dif x=\int_{0}^{\Xi} \ubU\, \dif x\}\,.$$

By a  Taylor expansion argument, seeking a GSS criterion amounts to finding conditions under which the operator $\Linvar=\Hess(\Ham +\speed \Impulse)[\ubU]$ is nonnegative on $\HH^2\cap T_{\ubU} {\mathscr C}$ with
$$T_{\ubU} {\mathscr C}:=\{\bU\in \HH^0\,;\;  \textstyle \int_{0}^{\Xi}\bU\cdot\nabla_{\bU}\Impulse(\ubU) \,\dif x =0\,,\;\int_{0}^{\Xi} \bU\, \dif x=0\}\,.$$
Even though it might not be clear at once that such criteria exist, they do. As we have recalled above (in \S\ref{ss:var}), for solitary waves a now well-known GSS criterion \cite{GSS}  is $\momB_{\speed\speed}>0$, where $\momB$ is to solitary waves what our $\Theta$ is to periodic waves. 
Behind this criterion is a rather general result, pointed out at various places and shown in most generality by 
Pogan, Scheel, and Zumbrun \cite{PoganScheelZumbrun}, which makes the connection between the negative signatures of the unconstrained version of the Hessian of the functional we are trying to minimize, of its constrained version, and of the  
Jacobian matrix of the values of the constraints in  terms of the Lagrange multipliers. More explicitly, 
in our framework with our notations, and under some `generic' assumptions, the negative signature $\neg(\Linvar)$ of the operator $\Linvar$ is found to be equal to the negative signature $\neg(\Linvar_{|T_{\ubU}{\mathscr C}})$
of its restriction to $T_{\ubU}{\mathscr C}$ plus the negative signature $\neg(-\bC)$ of $-\bC$, where $\bC$ is the Jacobian matrix of the values of the constraints, 
$\int_{0}^{\Xi} \ubU$, $\int_{0}^{\Xi}\Impulse(\ubU)$, in terms of the Lagrange multipliers $(\blambda,\speed)$ when the period $\Xi$ is fixed. The counterpart of this matrix $\bC$ for solitary waves is just the scalar  $\momB_{\speed\speed}$, in which case 
we readily see that $\momB_{\speed\speed}>0$ is equivalent to $\neg(-\bC)=1$.
We now give a version of the Pogan--Scheel--Zumbrun theorem adapted to our framework and notations for periodic waves.

\begin{theorem}
\label{thm:PSZ}
Under the hypotheses of Proposition \ref{prop:param}, 
we assume moreover that $\Xi_\mu\neq 0$, and that $$\bC:= \check\nabla^2 \Theta \,-\,\frac{\check\nabla \Xi\otimes \check\nabla \Xi}{\Xi_\mu}$$ takes nonsingular values, with $\Theta$ defined as in \eqref{eq:defTheta} by
$$\Theta(\mu,\blambda,\speed):= \int_{0}^{\Xi} (\Ham(\ubU,\ubU_x)+\speed \Impulse(\ubU) + \blambda\cdot \ubU + \mu)\,\dif x\,,$$
and $\check\nabla$ being a shortcut for the gradient with respect to $(\blambda,\speed)$ at fixed $\mu$.
Then, denoting $\Linvar:=\Hess(\Ham+\speed \Impulse)[\ubU]$, we have
$$\neg(\Linvar) = \neg(\Linvar_{|T_{\ubU}{\mathscr C}}) \,+\,\neg(-\bC)\,.$$

\end{theorem}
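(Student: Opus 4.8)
The plan is to reduce the statement to an abstract constrained-index (inertia) identity and then to carry out a concrete computation identifying the associated constraint matrix with $-\bC$. First I would record that the two constraints cutting out ${\mathscr C}$ have gradients $\nabla_{\bU}\Impulse(\ubU)=\bB^{-1}\ubU$ (impulse) and the constant vectors $e_1,\dots,e_N$ (mean values), so that $T_{\ubU}{\mathscr C}$ is the $L^2$-orthogonal complement of the $(N+1)$-dimensional space $G:=\mathrm{span}(\bB^{-1}\ubU,e_1,\dots,e_N)$. Using periodicity of $\ubU$ one checks that every element of $G$ is orthogonal to $\ubU_x$; moreover $\Xi_\mu\neq0$ is exactly what keeps the secular solution $\ubU_\mu$ out of $L^2(\R/\Xi\Z)$, so the periodic kernel of $\Linvar$ reduces to $\mathrm{span}(\ubU_x)$. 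Hence $G\subset\mathrm{Range}(\Linvar)=(\ker\Linvar)^\perp$ and $\ker\Linvar\subset T_{\ubU}{\mathscr C}$. These are precisely the hypotheses under which the abstract lemma behind \cite{PoganScheelZumbrun} applies: if $g_1,\dots,g_{N+1}$ span $G\subset\mathrm{Range}(\Linvar)$, if $\ker\Linvar\subset G^\perp$, and if the symmetric matrix $\mathcal{D}$ with entries $\mathcal{D}_{ij}=\langle\Linvar^{-1}g_i,g_j\rangle$ (well defined modulo $\ker\Linvar$ since $g_j\perp\ker\Linvar$, and with the \emph{periodic} preimage) is nonsingular, then $\neg(\Linvar)=\neg(\Linvar_{|G^\perp})+\neg(\mathcal{D})$. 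The whole theorem then amounts to showing $\mathcal{D}=-\bC$.

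To build $\mathcal{D}$ I would differentiate the profile equation \eqref{eq:EL} with respect to the parameters. Since $\Euler(\Ham+\speed\Impulse)[\ubU]=-\blambda$ and $\Linvar$ is the linearization of $\Euler(\Ham+\speed\Impulse)$ at $\ubU$, differentiating at fixed $\mu$ gives $\Linvar\,\ubU_{\lambda_\beta}=-e_\beta$ and $\Linvar\,\ubU_\speed=-\bB^{-1}\ubU$, while differentiating in $\mu$ gives $\Linvar\,\ubU_\mu=0$. The main obstacle is that none of these parameter derivatives is $\Xi$-periodic: differentiating the identity $\ubU(x+\Xi)=\ubU(x)$ shows that each has the same type of defect, $\ubU_p(x+\Xi)-\ubU_p(x)=-\Xi_p\,\ubU_x(x)$, and in particular $\ubU_\mu$ is genuinely secular precisely because $\Xi_\mu\neq0$. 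Exploiting that all defects are proportional to the single function $\ubU_x$, I would periodize by setting, for $a\in\{\lambda_1,\dots,\lambda_N,\speed\}$,
$$s_a:=-\ubU_a+\frac{\Xi_a}{\Xi_\mu}\,\ubU_\mu\,,$$
which is $\Xi$-periodic (the defect cancels) and still solves $\Linvar\,s_a=h_a$, where $h_{\lambda_\beta}=e_\beta$ and $h_\speed=\bB^{-1}\ubU$. This is the step where $\Xi_\mu\neq0$ is indispensable, and where the correction $\check\nabla\Xi\otimes\check\nabla\Xi/\Xi_\mu$ in $\bC$ originates.

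It then remains to evaluate $\mathcal{D}_{ab}=\langle s_a,h_b\rangle$ and match it with $-\bC_{ab}=-(\Theta_{ab}-\Xi_a\Xi_b/\Xi_\mu)$. Writing $h_b=\nabla_{\bU}P_b(\ubU)$ with $P_{\lambda_\beta}=U_\beta$ and $P_\speed=\Impulse$, so that $\Theta_b=\int_0^\Xi P_b(\ubU)\,\dif x$ by Proposition \ref{prop:param}, the chain rule gives $\langle\ubU_a,h_b\rangle=\int_0^\Xi\ubU_a\cdot\nabla_{\bU}P_b(\ubU)\,\dif x$, while differentiating $\Theta_b$ in $a$ (keeping track of the $\Xi$-dependent upper limit) gives $\Theta_{ab}=\Xi_a\,P_b(\ubU(\Xi))+\int_0^\Xi\ubU_a\cdot\nabla_{\bU}P_b(\ubU)\,\dif x$. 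Hence $\langle\ubU_a,h_b\rangle=\Theta_{ab}-\Xi_a\,P_b(\ubU(\Xi))$, and in particular $\langle\ubU_\mu,h_b\rangle=\Xi_b-\Xi_\mu\,P_b(\ubU(\Xi))$ using $\Theta_{\mu b}=\Xi_b$. Substituting,
$$\mathcal{D}_{ab}=-\big(\Theta_{ab}-\Xi_a P_b(\ubU(\Xi))\big)+\frac{\Xi_a}{\Xi_\mu}\big(\Xi_b-\Xi_\mu P_b(\ubU(\Xi))\big)=-\Big(\Theta_{ab}-\frac{\Xi_a\Xi_b}{\Xi_\mu}\Big)=-\bC_{ab}\,,$$
the asymmetric boundary contributions $\Xi_a P_b(\ubU(\Xi))$ cancelling exactly (a reassuring consistency check, since $\mathcal{D}$ must be symmetric whereas $\langle\ubU_a,h_b\rangle$ is not). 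Thus $\mathcal{D}=-\bC$, which is nonsingular by assumption, and the abstract identity yields $\neg(\Linvar)=\neg(\Linvar_{|T_{\ubU}{\mathscr C}})+\neg(-\bC)$. The only two points requiring genuine care, rather than routine calculation, are the periodization of the second paragraph and the bookkeeping of the $\Xi$-dependent boundary terms in the third; everything else is formal.
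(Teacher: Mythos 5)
Your proposal is correct and follows essentially the same route as the paper: the decisive step in both is to form the $\Xi$-periodic combinations $\ubU_{a}-(\Xi_{a}/\Xi_{\mu})\,\ubU_{\mu}$, which solve $\Linvar(\cdot)=-h_{a}$ with $h_{\lambda_\beta}=\ee_\beta$, $h_{\speed}=\nabla_{\bU}\Impulse(\ubU)$, and to identify the resulting Gram-type matrix with $-\bC$ (your boundary-term bookkeeping in $\Theta_{ab}$ reproduces exactly the paper's computation of the Jacobian of the fixed-period constraint map). The only difference is presentational: you delegate the final inertia count to the abstract constrained-index lemma of Pogan--Scheel--Zumbrun (your claim that $\ker\Linvar$ is exactly spanned by $\ubU_x$ is not needed for this, since $\ker\Linvar\subset G^{\perp}$ already follows from $G\subset\mathrm{Range}(\Linvar)$ by self-adjointness), whereas the paper proves that step directly by splitting $\HH^0$ as the span of those combinations plus $T_{\ubU}{\mathscr C}$ and checking that the cross terms of the quadratic form vanish.
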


\begin{proof} By assumption, the period $\Xi$ of a given profile $\ubU$ is a smooth function of the $N+2$ parameters $(\mu,\blambda,\speed)$.
The fact that $\Xi_\mu\neq 0$ implies by the implicit function theorem that $\mu$  can be viewed as a smooth function $\mu=\mu(\Xi,\blambda,\speed)$, and that 
\begin{equation}\label{eq:dermu}
\frac{\partial \mu}{\partial \lambda_\alpha}= -\,\frac{\Xi_{\lambda_\alpha}}{\Xi_\mu}\,,\;\frac{\partial \mu}{\partial \speed}= -\,\frac{\Xi_{\speed}}{\Xi_\mu}\,.
\end{equation}
Since we are interested in the signature of $\Linvar$ on $\HH^0=(L^2(\R/\Xi\Z))^N$, we shall mostly concentrate on travelling profiles of \emph{fixed} period $\Xi$, which are solution to \eqref{eq:EL}-\eqref{eq:ELham} with $\mu=\mu(\Xi,\blambda,\speed)$.
For such profiles, let us denote by $\bq$ the constraints mapping
$$\bq:(\blambda,\speed)\mapsto \textstyle(\int_{0}^\Xi \ubU\,\dif x, \int_{0}^\Xi \Impulse(\ubU)\,\dif x)\,,$$
and $q_\alpha$ its components, $\alpha\in \{1,\ldots,N+1\}$,
$$q_\alpha(\blambda,\speed):=\int_{0}^\Xi \uU_\alpha\,\dif x\,,\;\alpha\in \{1,\ldots,N\}\,,\quad q_{N+1}(\blambda,\speed):=\int_{0}^\Xi \Impulse(\ubU)\,\dif x\,.$$
From Eqs \eqref{eq:param} in Proposition \ref{prop:param} and Eqs in \eqref{eq:dermu}, we infer that for $\alpha\,,\beta\leq N$,
$$\frac{\partial q_{\beta}}{\partial \lambda_\alpha}= \Theta_{\lambda_\alpha \lambda_\beta}\, -\,\frac{\Xi_{\lambda_\alpha}\Xi_{\lambda_\beta}}{\Xi_\mu}\,,\;\frac{\partial q_{\beta}}{\partial c}= \Theta_{c \lambda_\beta}\, -\,\frac{\Xi_{c}\Xi_{\lambda_\beta}}{\Xi_\mu}\,\,,$$
$$\frac{\partial q_{N+1}}{\partial \lambda_\alpha}= \Theta_{\lambda_\alpha c}\, -\,\frac{\Xi_{\lambda_\alpha}\Xi_{c}}{\Xi_\mu}\,,\;\frac{\partial q_{N+1}}{\partial c}= \Theta_{c c}\, -\,\frac{\Xi_{c}\Xi_{c}}{\Xi_\mu}\,\,.$$
In other words, the Jacobian matrix of $\bq$ is indeed 
$$\bC= \check\nabla^2 \Theta \,-\,\frac{\check\nabla \Xi\otimes \check\nabla \Xi}{\Xi_\mu}=\check\nabla^2 \Theta \,-\,\frac{\check\nabla \Theta_\mu\otimes \check\nabla  \Theta_\mu}{ \Theta_{\mu\mu}}\,.$$
Now, by differentiating \eqref{eq:EL} with respect to $\mu$, $\blambda$ or $\speed$, we see that
\begin{equation}\label{eq:Linvarid}
\Linvar \left(\ubU_{\lambda_\alpha}\,-\,\tfrac{\Xi_{\lambda_\alpha}}{\Xi_\mu}\,\ubU_\mu\right)\,=\,-\,
\ee_{\alpha}\,,\;\Linvar \left(\ubU_{\speed}\,-\,\tfrac{\Xi_{\speed}}{\Xi_\mu}\,\ubU_\mu\right)\,=\,-\,\nabla_{\bU} \Impulse(\ubU)\,,
\end{equation}
where $\ee_{\alpha}$ denotes the $\alpha$-th vector of the `canonical' basis of $\R^N$, hence the alternative expression
for $\alpha\,,\beta\leq N$, 
\begin{equation}\label{eq:altC} 
\begin{array}{rcl}
\bC_{\alpha,\beta}&=&-\left\langle \left(\ubU_{\lambda_\beta}\,-\,\tfrac{\Xi_{\lambda_\beta}}{\Xi_\mu}\,\ubU_\mu\right)\cdot \Linvar \left(\ubU_{\lambda_\alpha}\,-\,\tfrac{\Xi_{\lambda_\alpha}}{\Xi_\mu}\,\ubU_\mu\right)\right\rangle\\[0.5em]
\bC_{\alpha,N+1}&=&-\left\langle \left(\ubU_{\speed}\,-\,\tfrac{\Xi_{\speed}}{\Xi_\mu}\,\ubU_\mu\right)\cdot \Linvar \left(\ubU_{\lambda_\alpha}\,-\,\tfrac{\Xi_{\lambda_\alpha}}{\Xi_\mu}\,\ubU_\mu\right)\right\rangle\\[0.5em]
\bC_{N+1,\beta}&=&-\left\langle \left(\ubU_{\lambda_\beta}\,-\,\tfrac{\Xi_{\lambda_\beta}}{\Xi_\mu}\,\ubU_\mu\right)\cdot \Linvar \left(\ubU_{\speed}\,-\,\tfrac{\Xi_{\speed}}{\Xi_\mu}\,\ubU_\mu\right)\right\rangle\\[0.5em]
\bC_{N+1,N+1}&=&-\left\langle \left(\ubU_{\speed}\,-\,\tfrac{\Xi_{\speed}}{\Xi_\mu}\,\ubU_\mu\right)\cdot \Linvar \left(\ubU_{\speed}\,-\,\tfrac{\Xi_{\speed}}{\Xi_\mu}\,\ubU_\mu\right)\right\rangle
\end{array}
\end{equation}
where $\langle\ \cdot\ \rangle$ denotes the inner product in $L^2(\R/\Xi\Z;\R^N)$.
This implies, if $\bC$ is nonsingular, that
$$\HH^0=\mbox{Span}(\ubU_{\lambda_1}\,-\,\tfrac{\Xi_{\lambda_1}}{\Xi_\mu}\,\ubU_\mu,\ldots,\ubU_{\lambda_N}\,-\,\tfrac{\Xi_{\lambda_N}}{\Xi_\mu}\,\ubU_\mu,\ubU_{\speed}\,-\,\tfrac{\Xi_{\speed}}{\Xi_\mu}\,\ubU_\mu)\,\oplus \,T_{\ubU} {\mathscr C}\,,$$
$$T_{\ubU} {\mathscr C}=\{\bU\in \HH^0\,;\;  
\langle \bU\cdot \nabla_{\bU}\Impulse(\ubU)\rangle\,=\,0\,,\;\langle \bU\rangle =0\}\,.$$
As a matter of fact, Equations in \eqref{eq:Linvarid}-\eqref{eq:altC} imply that for any $\bV\in \HH^0$, there is one and only one $(a_1,\ldots, a_{N+1},\bU)\in \R^{N+1}\times T_{\ubU} {\mathscr C}$ such that
$$\bV=a_1\left(\ubU_{\lambda_1}\,-\,\tfrac{\Xi_{\lambda_1}}{\Xi_\mu}\,\ubU_\mu\right)\,+\,\cdots\,\,+\,a_N\left(\ubU_{\lambda_N}\,-\,\tfrac{\Xi_{\lambda_N}}{\Xi_\mu}\,\ubU_\mu\right)\,+\,a_{N+1}\left(\ubU_{\speed}\,-\,\tfrac{\Xi_{\speed}}{\Xi_\mu}\,\ubU_\mu\right)\,+\,\bU\,,$$ 
which can be computed by solving the $(N+1)\times (N+1)$ system
$$\left(\begin{array}{c}
\int_{0}^\Xi V_1\,\dif x \\
\vdots \\
\int_{0}^\Xi V_N\,\dif x \\
\int_{0}^{\Xi}\bV\cdot\nabla_{\bU}\Impulse(\ubU) \,\dif x  \end{array}\right)\,=\,\bC\, \left(\begin{array}{l}
a_1 \\
\vdots \\
a_N \\
a_{N+1}  \end{array}\right)\,.
$$
In order to conclude, let us denote by $\bPi_0$ the orthogonal projection onto the space 
$$\mbox{Span}(\ubU_{\lambda_1}\,-\,\tfrac{\Xi_{\lambda_1}}{\Xi_\mu}\,\ubU_\mu,\ldots,\ubU_{\lambda_N}\,-\,\tfrac{\Xi_{\lambda_N}}{\Xi_\mu}\,\ubU_\mu,\ubU_{\speed}\,-\,\tfrac{\Xi_{\speed}}{\Xi_\mu}\,\ubU_\mu)\,,$$
and by $\bPi_1$ the orthogonal projection onto $T_{\ubU} {\mathscr C}$. We readily see that for all $\bU\in T_{\ubU} {\mathscr C}$ and 
$\bV_0\in \mbox{Span}(\ubU_{\lambda_1}\,-\,\tfrac{\Xi_{\lambda_1}}{\Xi_\mu}\,\ubU_\mu,\ldots,\ubU_{\lambda_N}\,-\,\tfrac{\Xi_{\lambda_N}}{\Xi_\mu}\,\ubU_\mu,\ubU_{\speed}\,-\,\tfrac{\Xi_{\speed}}{\Xi_\mu}\,\ubU_\mu)$, 
$$\langle \bU \cdot \Linvar \bV_0\rangle=0\,,$$
hence 
$$\bPi_1 \Linvar \bPi_0 =0\,,\;\bPi_0 \Linvar \bPi_1 =0\,.$$
Therefore, for all $\bV\in {\mathscr D}(\Linvar)=\HH^2$,
$$\langle \bV \cdot \Linvar\bV\rangle= \langle \bV \cdot \bPi_0 \Linvar \bPi_0 \bV\rangle\,+\,\langle \bV \cdot \bPi_1 \Linvar \bPi_1 \bV\rangle\,.$$
From this relation we see that the negative signature of $\Linvar$ is the sum of those of 
$\bPi_0 \Linvar \bPi_0$ and $ \bPi_1 \Linvar \bPi_1 $. The latter is the negative signature of $\Linvar_{|T_{\ubU}{\mathscr C}}$, by definition of the projection $\bPi_1$, while the former coincides with the negative signature of $-\bC$ by definition of the projection $\bPi_0$ and
by the expression of $\bC$ in \eqref{eq:altC}.
\end{proof}

Note that ${\mathscr C}$ is a codimension $(N+1)$ manifold of $\HH^0$. As a matter of fact, the constraints defining ${\mathscr C}$ are `full rank', in the sense that 
for all $(\bm,q)\in \R^{N+1}$, there exists $\bU\in \HH^0$ such that
$$\int_{0}^\Xi \bU\,\dif x \,=\, \bm\,,\; \int_{0}^\Xi \bU\cdot\nabla_{\bU} \Impulse(\ubU)\,\dif x\,=\,q\,.$$
Recalling that $\nabla_{\bU} \Impulse(\ubU)=\bB^{-1}\ubU$, we may take for instance $\bU=\frac{\bm}{\Xi}\,+\,a\,\bB^{-1} \ubU_{xx}$ with 
$$a=\frac{\bm\cdot \bB^{-1} \ubM -q}{\int_{0}^{\Xi} \|\bB^{-1}\ubU_{x}\|^2\,\dif x}\,.$$

\begin{corollary} 
If the negative signatures of the operator $\Linvar$ and of the matrix $\bC$ defined in Theorem \ref{thm:PSZ} are equal, then the periodic travelling wave $(x,t)\mapsto \ubU(x-\speed t)$ is (conditionally) orbitally stable to co-periodic perturbations.
\end{corollary}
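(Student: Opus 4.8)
The plan is to convert the hypothesis into nonnegativity of $\Linvar$ on the constraint tangent space $T_{\ubU}{\mathscr C}$, and then to run a constrained Lyapunov argument in the spirit of Grillakis--Shatah--Strauss with $\funct^{(\speed,\blambda,\mu)}$ playing the role of the Lyapunov functional. First I would feed the hypothesis into the signature identity of Theorem \ref{thm:PSZ}, namely $\neg(\Linvar) = \neg(\Linvar_{|T_{\ubU}{\mathscr C}}) + \neg(-\bC)$: equality of the relevant negative signatures (that is, $\neg(\Linvar)=\neg(-\bC)$) forces $\neg(\Linvar_{|T_{\ubU}{\mathscr C}}) = 0$, so that the quadratic form $\bV\mapsto\langle\bV\cdot\Linvar\bV\rangle$ is nonnegative on $T_{\ubU}{\mathscr C}$. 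This is the exact periodic analogue of the condition $\momB_{\speed\speed}>0$ recalled in \S\ref{ss:var} for solitary waves.

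Second, I would isolate the unavoidable degeneracy coming from translation invariance. Differentiating the profile equation \eqref{eq:EL} in $x$ gives $\Linvar\ubU_x = 0$, and one checks that $\ubU_x \in T_{\ubU}{\mathscr C}$: indeed $\int_0^\Xi \ubU_x\,\dif x = 0$ by periodicity, while $\int_0^\Xi \ubU_x\cdot\nabla_{\bU}\Impulse(\ubU)\,\dif x = \tfrac{1}{2}\int_0^\Xi \partial_x(\ubU\cdot\bB^{-1}\ubU)\,\dif x = 0$. Using that $\Linvar$, as a self-adjoint operator on $L^2(\R/\Xi\Z)$, has compact resolvent (hence discrete spectrum with $0$ isolated) and that its kernel reduces to $\R\ubU_x$ --- a genericity already implicit in the setting of Theorem \ref{thm:PSZ} --- I would upgrade nonnegativity to a spectral gap: there is $\delta>0$ with $\langle\bV\cdot\Linvar\bV\rangle \geq \delta\,\|\bV\|_{\HH^1}^2$ for every $\bV\in T_{\ubU}{\mathscr C}$ orthogonal to $\ubU_x$. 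This coercivity modulo translations is the nondegenerate minimization exploited in \cite[Theorem 3.5]{GSS}.

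Third, I would assemble orbital stability. Given initial data $\bU(0)$ close to $\ubU$ in $\HH^1$, the implicit function theorem furnishes a phase $s(\bU(0))$ with $\bU(0)(\cdot+s)-\ubU$ orthogonal to $\ubU_x$, as recalled in \S\ref{ss:var}; adjusting $(\mu,\blambda,\speed)$, which is possible because the constraints are full rank as noted after Theorem \ref{thm:PSZ}, one re-centers on a nearby profile carrying the same values of $\int_0^\Xi\bU\,\dif x$ and $\int_0^\Xi\Impulse(\bU)\,\dif x$ as $\bU(0)$, so that $\bU(0)$ lies on the corresponding manifold ${\mathscr C}$. A Taylor expansion of $\funct^{(\speed,\blambda,\mu)}$ about this profile combined with the coercive estimate then yields
$$\funct^{(\speed,\blambda,\mu)}[\bU] - \funct^{(\speed,\blambda,\mu)}[\ubU] \;\geq\; \tfrac{\delta}{2}\,\inf_{s}\|\bU - \ubU(\cdot+s)\|_{\HH^1}^2$$
for $\bU\in{\mathscr C}$ near $\ubU$. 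Since, in the moving frame, $\funct^{(\speed,\blambda,\mu)}$ is a combination of conserved quantities (the Hamiltonian, the impulse integral, and the mean of $\bU$), it is constant along the flow; inserting the conservation of $\funct^{(\speed,\blambda,\mu)}$ and of the constraints into this lower bound, the standard continuity/contradiction argument of \cite{GSS,BSS} shows that $\inf_s\|\bU(t)-\ubU(\cdot+s)\|_{\HH^1}$ remains small for all $t$, i.e. co-periodic orbital stability. The qualifier \emph{conditionally} records that the argument presupposes a well-defined flow map $\bU(0)\mapsto\bU(t)$ on $\HH^1$.

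The main obstacle is the passage from mere nonnegativity of $\Linvar$ on $T_{\ubU}{\mathscr C}$ to the coercive estimate modulo translations: one must secure that $0$ is isolated in the spectrum of $\Linvar_{|T_{\ubU}{\mathscr C}}$ with eigenspace exactly $\R\ubU_x$, so that the quadratic form genuinely controls the full $\HH^1$-norm on the complement rather than degenerating in additional directions. Secondarily, converting the constrained coercivity into time-uniform orbital stability requires treating data that do not exactly meet the constraints by re-centering the reference profile, and ultimately rests on the conditional existence of the flow, which is precisely why the conclusion is stated only conditionally.
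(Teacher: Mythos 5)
Your proposal is correct and follows essentially the same route as the paper: Theorem \ref{thm:PSZ} yields $\neg(\Linvar_{|T_{\ubU}{\mathscr C}})=0$, coercivity modulo translations is secured via the implicit function theorem (you rightly flag the need for $0$ to be isolated with kernel $\R\ubU_x$, which the paper passes over more quickly), and orbital stability then follows from conservation of $\funct^{(\speed,\blambda,\mu)}$ and the contradiction argument of \cite{GSS,BSS}. The only variation is in handling data off the constraint set: you re-center the reference profile by adjusting $(\mu,\blambda,\speed)$, whereas the paper keeps $\ubU$ fixed and projects the solution onto ${\mathscr C}$ using the full-rank property and the submersion theorem --- both devices are standard and interchangeable here.
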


\begin{proof}
From Theorem \ref{thm:PSZ} we infer that the negative signature of 
$\Linvar_{|T_{\ubU}{\mathscr C}}$ 
is zero. In other words, the functional $\funct^{(\speed,\blambda,\mu)}$ does have a local minimum at $\ubU$ on $\HH^1\cap {\mathscr C}$. (This follows from a Taylor expansion and the density of ${\mathscr D}(\Linvar)$ in $\HH^1$.) The fact that it is not a strict minimum can be coped with  by `factoring out' the translation-invariance problem in the usual way. Namely, by the implicit function theorem, there exists a tubular neighborhood ${\mathscr N}$ in $L^2(\R/\Xi\Z)$ of $\ubU$ and all its translates $\ubU(\cdot+\xi)$ for $\xi\in \R$, and a smooth mapping $s: {\mathscr N}\to \R$ such that for all $\bU\in {\mathscr N}$, 
$\bU(\cdot-s(\bU))-\ubU$ is orthogonal to $\ubU_x$. As a consequence, up to diminishing ${\mathscr N}$, we can find an $\alpha>0$ so that for all $\bU\in {\mathscr N}\cap \HH^1\cap {\mathscr C}$,
$$\funct^{(\speed,\blambda,\mu)}[\bU]-\funct^{(\speed,\blambda,\mu)}[\ubU]=\int_{0}^{\Xi} (\Ham(\bU,\bU_x)- \Ham(\ubU,\ubU_x))\,\dif x\,\geq \alpha\,\|\bU-\ubU(\cdot + s(\bU))\|^2_{\HH^1}\,.$$
This enables us to show the following, conditional stability result. If $\HH\subset \HH^1$ is such that the Cauchy problem associated with \eqref{eq:absHam} is locally well-posed in $\HH$, if we denote by $T(\bU_0)$ the maximal time of existence of the solution $\bU$ of \eqref{eq:absHam} in $\HH$ with initial data $\bU_0\in \HH$,
$$
\forall \varepsilon>0,\;\exists \delta>0\,;\;
\forall \bU_0\in \HH\,;\;
\|\bU_0\,-\,\ubU\|_{\HH^1}\,\leq \delta\; \Rightarrow \begin{array}[t]{l} \forall t\in [0,T(\bU_0))\,,\\ [5pt]
\displaystyle \inf_{s\in \R}\,\|\bU(t,\cdot)\,-\,\ubU(\cdot+s)\|_{\HH^1}\,\leq\,\varepsilon\,.\end{array}
$$
The proof works by contradiction, as in \cite{GSS,BSS}, even though an alternative, direct proof as in \cite{Johnson} is also possible. Assume there exist $\varepsilon>0$, and a sequence of initial data $\bU_{0,n}\in \HH$ such that 
$\inf_{s\in \R}\,\|\bU_{0,n}\,-\,\ubU(\cdot+s)\|_{\HH^1}$ goes to zero while 
$$\sup_{t\in[0,T(\bU_0))}\inf_{s\in \R}\,\|\bU_n(t,\cdot)\,-\,\ubU(\cdot+s)\|_{\HH^1}\,>\, \varepsilon\,.$$
Without loss of generality, we can assume that the tubular neighborhood of $\ubU$ of radius $2\varepsilon$ is contained in ${\mathscr N}$.
We choose $t_n$ to be the least value such that 
$$\inf_{s\in \R}\,\|\bU_n(t_n,\cdot)\,-\,\ubU(\cdot+s)\|_{\HH^1}\,=\, \varepsilon\,.$$
By invariance of $\int _0^\Xi \Ham[\bU]\,\dif x$, $\int _0^\Xi \Impulse[\bU]\,\dif x$, and 
$\int _0^\Xi \bU\,\dif x$, with respect to time evolution and spatial translations, we have
$$\int_0^\Xi \Ham[\bU_n(t_n)]\,\dif x\,=\,\int_0^\Xi \Ham[\bU_{0,n}]\,\dif x\,\to \,\int_0^\Xi \Ham[\ubU]\,\dif x\,,$$
$$\int_0^\Xi \Impulse(\bU_n(t_n))\,\dif x\,=\,\int_0^\Xi \Impulse(\bU_{0,n})\,\dif x\,\to \,\int_0^\Xi \Impulse(\ubU)\,\dif x\,,$$
$$\int_0^\Xi \bU_n(t_n)\,\dif x\,=\,\int_0^\Xi \bU_{0,n}\,\dif x\,\to \,\int_0^\Xi \ubU\,\dif x\,.$$
This implies, by using the full-rank property mentioned above and the submersion theorem that
we can pick for all $n$ some $\bV_n\in {\mathscr N}\cap\HH^1\cap {\mathscr C}$ such that
$\|\bV_n-\bU_n(t_n)\|_{\HH^1}$ goes to zero, as well as 
$$ \int_0^\Xi (\Ham[\bV_n]-\Ham[\ubU])\,\dif x\,\to\,0\,.$$
Therefore, 
$$\|\bV_n(\cdot)-\ubU(\cdot+s(\bV_n))\|^2_{\HH^1}\leq \,\frac{1}{\alpha}\,\int_0^\Xi (\Ham[\bV_n]-\Ham[\ubU])\,\dif x\,\to\,0\,,$$
hence 
$$\|\bU_n(t_n,\cdot)-\ubU(\cdot+s(\bV_n))\|\,\to\,0$$
by the triangular inequality. This is a contradiction of the definition of $t_n$.
\end{proof}

\begin{remark}\label{rem:KdV}
For (KdV), a case in which $N=1$, it has been shown by Johnson \cite{Johnson} that a periodic wave is orbitally stable to co-periodic perturbations under the two conditions
\begin{equation}\label{eq:stabJ}\Theta_{\mu\mu}>0\,,\qquad\det (\Hess\Theta)<0\,.
\end{equation}
It is not difficult to see that these assumptions imply that the constraints matrix $\bC$ has signature $(-,+)$. Indeed, for $N=1$ we have 
$${\bC}=\frac{1}{\Theta_{\mu\mu}}\,\left(\begin{array}{cc}\Theta_{\mu \mu} \Theta_{\lambda\lambda}-\Theta_{\mu \lambda} \Theta_{\lambda\mu} & \Theta_{\mu \mu} \Theta_{\speed\lambda}-\Theta_{\mu \speed} \Theta_{\lambda\mu}\\
\Theta_{\mu \mu} \Theta_{\lambda\speed}-\Theta_{\mu \lambda} \Theta_{\speed\mu} &  \Theta_{\mu \mu} \Theta_{\speed\speed}-\Theta_{\mu\speed} \Theta_{\speed\mu}\end{array}\right) \,,$$
and a bit of algebra shows that
$$ \Theta_{\mu\mu}\;\det \bC\,=\, \det (\Hess\Theta)\,,$$
so that if \eqref{eq:stabJ} hold true, $\det (-\bC)=\det \bC<0$ thus $\neg(-\bC)=1$. The result then follows from \cite[Lemma~4.2]{Johnson}, which  proves that $\Theta_{\mu\mu}>0$ implies $\neg(\Linvar)=1$.
\end{remark}

\section*{Appendix}

\paragraph{Table of examples}$\;$

\vspace{2mm}
\noindent
\begin{tabular}{|c|c|c|c|c|c|
}\hline
&&&&&
\\
& $N$ & $\bU$ & $\sJ$ & $\Ham$ &   $\Impulse$ 
\\  [10pt] \hline
&&&&&
\\
({KdV}) & 1 & v & $\partial_x$ & $\frac{1}{2}v_x^2+f(v)$ & $\tfrac{1}{2}\,v^2$ 
\\ [10pt] \hline
&&&&&
\\
({EKL}) & 2 & $\left(\begin{array}{c}\vol \\ \vitsL \end{array}\right)$& 
$\left(\begin{array}{cc}0 &  \partial_y \\
 \partial_y & 0\end{array}\right)$ & $\frac{1}{2} \vits^2\,+\,\en(\vol,\vol_y)$ & $\vol \vitsL$ 
  \\ [10pt] \hline
&&&&&
\\
({EKE}) & 2 & $\left(\begin{array}{c}\rho \\ \vits \end{array}\right)$ & 
$-\left(\begin{array}{cc}0 &  \partial_x \\
 \partial_x & 0 \end{array}\right)$ & $\frac{1}{2}\rho \vits^2\,+\,\En(\rho,\rho_x)$ & $-\rho \vits$ 
 \\ [10pt] \hline
&&&&&
\\
(B) & 2 & $\left(\begin{array}{c}\chi  \\ \chi_t \end{array}\right)$ & $\left(\begin{array}{cc}0 &   1 \\
 -1 &  0\end{array}\right)$ & $\frac{1}{2} \chi_t^2\,+\,W(\chi_x)\,\pm\,\frac{1}{2} \chi_{xx}^2$ & $ \chi_t\chi_x$
 \\ [10pt] \hline
&&&&&
\\
(NLW) & 2 & $\left(\begin{array}{c}\chi \\   \chi_t \end{array}\right)$ & $\left(\begin{array}{cc}0 &   1 \\
 -1 &  0\end{array}\right)$ & $\frac{1}{2} \chi_t^2\,+\,\frac{1}{2} \chi_x^2\,+\,V(\chi)$ &$ \chi_t\chi_x$
 \\ [10pt] \hline
&&&&&
\\
(NLS) & 2 & $\left(\begin{array}{c}\Real\psi \\ \Imag\psi \end{array}\right)$ & $\left(\begin{array}{cc}0 & 1 \\-1 & 0\end{array}\right)$   & $ \frac{1}{2} |\psi_x|^2\,+\,F(|\psi|^2)$ & $-\Imag(\overline{\psi} \, \psi_x )$ 
\\  [10pt] \hline
\end{tabular}

\paragraph{Sturm--Liouville argument}
Assume that  
$$\Ham= \Ham(v,\bu,v_x)\,=\,\En(v,v_x)\,+\,\Ec(v,\bu)\,,
\quad
\frac{\partial^2 \En}{\partial v_x^2}=:\kappa(v)>0\,,\quad 
\nabla^2_{\bu}\Ec=:\bT(v)>0\,,$$
$$\Impulse=\Impulse(\bU)= \tfrac{1}{2}\, \bU\cdot \bB^{-1} \bU\,,\quad \bU^\transp=(v,\bu^\transp)\,,\;\bB^{-1}=\left(\begin{array}{cc} a & \bb^\transp \\ \bb & 0_{N-1}\end{array}\right)\,,$$
with $\kappa(v)>0$ and $\bT(v)$ symmetric definite positive for all $v$, and $\bb\neq 0$.
The profile equations $\Euler(\Ham+\speed\Impulse)[\ubU]+\blambda=0$ equivalently read
$$\left\{\begin{array}{l}\Euler \En[\uv] \,+\,\partial_v \Ec(\uv,\ubu)
 \,+\,\speed\,(a\,\uv\,+\,\bb\cdot \ubu)\,+\,\lambda_1\,=\,0\,,
\\ [8pt]  \nabla_{\bu} \Ec(\uv,\ubu)
\,+\,\speed\,\uv \,\bb\,+\,\check\blambda\,=\,0\,,\end{array}\right.$$
and their integrated version
$$\Legendre(\Ham+\speed\Impulse\,+\,\blambda\cdot \bU)[\ubU]=\mu$$
reads $\Legendre \lag[\uv]\,=\,\mu$, where 
$\lag=\lag(v,v_x;\speed,\blambda)$
is defined by
$$\lag\,=\, \En(v,v_x)\,+\,\Ec(v,\bff(v;\speed,\check\blambda))\,+\,\speed\,(\tfrac{1}{2} a v^2+ v\,\bb\cdot \bff(v;\speed,\check\blambda))\,+\,\lambda_1\,v\,+\,\check\blambda\cdot \bff(v;\speed,\check\blambda)\,,$$
$$\bff(v;\speed,\check\blambda):=-\bT(v)^{-1}\,(\nabla_{\bu} \Ec(v,0)+\speed\, v\, \bb +\check\blambda)\,.$$
Defining
$$\Linvar:=\Hess(\Ham+\speed\Impulse)(\ubU)=\left(\begin{array}{c|c}
\Hess \En[\uv] +\partial_v^2\Ec(\uv,\ubu)+\speed a& (\partial_v\nabla_{\bu} \Ec(\uv,\ubu)+\speed \bb)^\transp
\\ \hline
\partial_v\nabla_{\bu} \Ec(\uv,\ubu) +\speed \bb
& \bT(\uv)\end{array}\right)\,,$$
we see by differentiating with respect to $x$ in the profile equations that $\Linvar\ubU_x=0$, or equivalently
$$\left\{\begin{array}{l}\Hess \En[\uv] \uv_x\,+\,\uv_x\partial_v^2\Ec(\uv,\ubu)\,+\,\ubu_x\cdot \partial_v \nabla_\bu\Ec(\uv,\ubu)\,+\,\speed\,(a \uv_x\,+\,\bb\cdot \ubu_x)\,=\,0\,,\\ [8pt]
\uv_x \partial_v \nabla_\bu\Ec(\uv,\ubu)\,+\,\bT(\uv)\,\ubu_x\,+\,\speed\,\uv_x \,\bb=\,0\,.
\end{array}\right.$$
This can be shown to imply that $\linvar\, \uv_x=0$ with $\linvar:=\Hess \lag[\uv]$. A simpler alternative to show that $\linvar\, \uv_x=0$ consists in differentiating with respect to $x$ in the Euler--Lagrange equation $\Euler \lag[\uv]=0$.
 If in addition  $\En$ depends quadratically on $v_x$, then $\linvar$ is of the form $-\partial_x \kappa(\uv)\partial_x\,+\,q(x)$, where $q(x)$ depends on the
profile $\uv$ -~which depends itself on $(\speed,\blambda,\mu)$~- and on the parameters $\speed,\blambda$. Hence $\linvar$ is  a \emph{Sturm--Liouville} operator with $\Xi$-periodic coefficients. The fact that $\linvar\, \uv_x=0$ and $\uv$ is $\Xi$-periodic (and not constant) implies that $\linvar$ has at least one, and at most two negative eigenvalues
(see for instance \cite[Theorem 5.37]{Teschl}).

\bibliographystyle{plain}

\end{document}